\numberwithin{equation}{section}\newtheorem{theorem}{Theorem}[section]
\newtheorem{corollary}[theorem]{Corollary}\newtheorem{lemma}[theorem]{Lemma}
\newtheorem{proposition}[theorem]{Proposition}\theoremstyle{remark}
\newtheorem{remark}{Remark}[section]
\theoremstyle{definition}
\DeclareMathOperator{\supp}{supp}
\newcommand{\bra}[1]{\langle #1 \rangle}
\newcommand{\one}[1]{\mathbf{1}_{#1}}
\def\Xint#1{\mathchoice
{\XXint\displaystyle\textstyle{#1}}%
{\XXint\textstyle\scriptstyle{#1}}%
{\XXint\scriptstyle\scriptscriptstyle{#1}}%
{\XXint\scriptscriptstyle\scriptscriptstyle{#1}}%
\!\int}
\def\XXint#1#2#3{{\setbox0=\hbox{$#1{#2#3}{\int}$ }
\vcenter{\hbox{$#2#3$ }}\kern-.58\wd0}}
\newcommand\dashint{\Xint-}
\title%
[Weighted estimates]%
{Weighted $L^{p}$ estimates for powers of selfadjoint operators}
\date{\today}    
\author{Federico Cacciafesta}
\address{Federico Cacciafesta: 
SAPIENZA --- Universit\`a di Roma,
Dipartimento di Matematica, 
Piazzale A.~Moro 2, I-00185 Roma, Italy}
\email{cacciafe@mat.uniroma1.it}
\author{Piero D'Ancona}
\address{Piero D'Ancona: 
SAPIENZA --- Universit\`a di Roma,
Dipartimento di Matematica, 
Piazzale A.~Moro 2, I-00185 Roma, Italy}
\email{dancona@mat.uniroma1.it}
\subjclass[2000]{
35J10, 
35Qxx, 
42B20, 
42B35 
}
\keywords{%
singular integrals,
weighted spaces,
Schr\"odinger operator,
Schr\"odinger equation,
Strichartz estimates,
smoothing estimates
}
\begin{document}\maketitle

\begin{abstract}
  We prove  $L^{p}$ and weighted $L^{p}$ estimates for bounded 
  functions of a selfadjoint operator satisfying
  a pointwise gaussian estimate for its heat kernel. As an application,
  we obtain weighted estimates for fractiional powers 
  of an electromagnetic Schr\"odinger operator with singular 
  coefficienta. The proofs are based on a modification of
  techniques due to Hebisch \cite{Hebisch90-b} and
  Auscher and Martell \cite{AuscherMartell06-a}.
\end{abstract}


\section{Introduction}\label{sec:intro} 

The question of $L^{p}$ estimates for functions of a selfadjoint
operator is a delicate one. Indeed, even for a Schr\"odinger
operator $H=-\Delta+V(x)$ with a nonnegative
potential $V\in C^{\infty}_{c}$,
and a bounded smooth function $f(t)$, the operator $f(H)$ defined
via spectral theory does not have in general a smooth kernel and
hence does not fall within the scope of the Calder\`on-Zygmund
theory. The first to overcome this difficulty
was Hebisch \cite{Hebisch90-b} who proved the following result;
we use the notation
\begin{equation*}
  S_{\lambda}f(t)=f(\lambda t),\qquad \lambda>0
\end{equation*}
for the scaling operator, and we denote by $H^{s}$ the usual 
$L^{2}$--Sobolev space.

\begin{theorem}[\cite{Hebisch90-b}]\label{the:hebischth}
  Let $H$ be a nonnegative selfadjoint operator on
  $L^{2}(\mathbb{R}^{n})$ satisfying a
  gaussian estimate
  \begin{equation}\label{eq:ggauss}
    0\le e^{-tH}(x,y)\le C t^{-\frac n2}e^{-\frac{|x-y|^{2}}{4t}},
  \end{equation}
  let $\phi\in C^{\infty}_{c}(\mathbb{R}^{+})$ be
  a nonzero cutoff, and assume the function
  $F(s)$ on $\mathbb{R}^{+}$ satisfies
  \begin{equation}\label{eq:horm}
    \sup_{t>0}\|\phi S_{t}F\|_{H^{a}}<\infty
    \quad\text{for some }\quad a>\frac{n+1}{2}.
  \end{equation}
  Then the operator $F(H)$ is bounded from $L^{1}$ to
  $L^{1,\infty}$ and on any $L^{p}$, $1<p<\infty$.
\end{theorem}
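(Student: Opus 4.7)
The plan is to follow Hebisch's original strategy: one cannot invoke Calder\'on--Zygmund theory directly because $F(H)$, defined only through the spectral theorem, need not have a classically smooth kernel; instead, one decomposes $F$ dyadically on the spectral side, obtains good pointwise (weighted $L^{2}$) estimates on the kernel of each piece using only the Gaussian bound \eqref{eq:ggauss}, and sums to verify a H\"ormander regularity condition on the kernel of $F(H)$. First I would fix a dyadic partition of unity $\sum_{k\in\mathbb Z}\psi(2^{k}s)=1$ with $\psi\in C^{\infty}_{c}(\mathbb R^{+})$ supported in $[1/2,2]$, and set $F_{k}(s)=F(s)\psi(2^{k}s)$, so that $F=\sum_{k}F_{k}$. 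Hypothesis \eqref{eq:horm} (applied with $\phi=\psi$) gives $\|\tilde F_{k}\|_{H^{a}}\le C$ uniformly in $k\in\mathbb Z$, where $\tilde F_{k}(s)=F_{k}(2^{-k}s)$ is the rescaled piece localized to the interval $[1/2,2]$.

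The core technical step is a weighted $L^{2}$ bound on the kernel $K_{k}(x,y)$ of $F_{k}(H)$, with Gaussian weight adapted to the natural scale $2^{-k/2}$:
\begin{equation*}
\int e^{c\,2^{k}|x-y|^{2}}\,|K_{k}(x,y)|^{2}\,dy\;\le\; C\,2^{kn/2}\,\|\tilde F_{k}\|_{H^{a}}^{2}.
\end{equation*}
To obtain this I would factor $F_{k}(s)=G_{k}(s)\,e^{-s/2^{k}}$ with $G_{k}(s)=F_{k}(s)\,e^{s/2^{k}}$, whose support and rescaled $H^{a}$-norm are comparable to those of $F_{k}$. The operator $F_{k}(H)=G_{k}(H)\,e^{-H/2^{k}}$ thus has the semigroup factor carrying the Gaussian kernel of \eqref{eq:ggauss}, which is able to absorb the weight $e^{c\,2^{k}|x-y|^{2}}$ (at a controlled loss in the Gaussian constants), while $G_{k}(H)$ is handled in weighted $L^{2}$ through a Plancherel argument on the spectral side using the $H^{a}$-regularity of $\tilde G_{k}$.

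By Cauchy--Schwarz the weighted estimate yields $\int_{|x-y|>R}|K_{k}(x,y)|\,dy\le C\exp(-c\,2^{k}R^{2})$, and a parallel argument for the $y$-increment (spectral differentiation costing a factor $2^{k/2}$) produces
\begin{equation*}
\int_{|x-y|>2|y-y'|}|K_{k}(x,y)-K_{k}(x,y')|\,dx\;\le\; C\min\bigl(1,\,2^{k/2}|y-y'|\bigr).
\end{equation*}
Summing over $k\in\mathbb Z$ converges precisely because $a>(n+1)/2$: the extra half-derivative beyond the $n/2$ needed for uniform $L^{1}$-control of $K_{k}(x,\cdot)$ is exactly what provides summability in the dyadic parameter. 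Since $\|F\|_{\infty}$ is finite by Sobolev embedding ($a>1/2$) and so $F(H)$ is already bounded on $L^{2}$ by the spectral theorem, the classical Calder\'on--Zygmund machinery then delivers the weak $L^{1}\to L^{1,\infty}$ bound and $L^{p}$-boundedness for every $1<p<\infty$.

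The main obstacle is the weighted $L^{2}$ kernel estimate of the second paragraph: converting purely spectral $H^{a}$-regularity of $F_{k}$ into pointwise Gaussian decay for the kernel of $F_{k}(H)$, with only the heat kernel bound \eqref{eq:ggauss} at one's disposal and no assumption of finite propagation speed. The delicate interplay between the Gaussian weight, the semigroup $e^{-tH}$, and the Sobolev regularity on the spectral side is what determines the precise threshold $a>(n+1)/2$; any smaller exponent would fail to close the dyadic sum in the H\"ormander estimate of the third paragraph.
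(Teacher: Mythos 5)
Your plan reproduces Hebisch's original strategy in broad outline (dyadic spectral decomposition, weighted $L^{2}$ kernel bounds obtained by factoring out a semigroup, H\"ormander-type condition, Calder\'on--Zygmund theory), and this is indeed a valid route to the $(n+1)/2$ threshold. The paper, however, does not reprove the theorem this way: it proves the sharper Theorem \ref{the:hebweakest} (which implies Theorem \ref{the:hebischth} through Remark \ref{rem:sob}) by a genuinely different mechanism. The key point you miss is in Remark \ref{rem:finitesp}: the Gaussian heat kernel bound (H) automatically implies the finite speed of propagation property for $\cos(t\sqrt{H})$, via a Phragm\'en--Lindel\"of/Paley--Wiener argument of Sikora. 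This is \emph{not} an extra assumption one must do without; it comes for free. The paper then uses the even representation $G(\sqrt{H})=\int\widehat G(\xi)\cos(\xi\sqrt{H})\,d\xi$, finite speed to localize on cubes of size $\sim\bra\xi$, unitarity of $\cos(\xi\sqrt{H})$ on $L^{2}$, and the Gaussian decay of $e^{-H}$ to absorb the polynomial weight (Lemma \ref{lem:heb0}). Combined with the $L^{1}$-Fourier (Besov) norm $\mu_{\sigma}$ in place of $H^{a}$, this is what reduces the threshold from $(n+1)/2$ to $n/2$. Your weighted-$L^{2}$/amalgam route, which treats finite speed as unavailable, is therefore both heavier and weaker; the $1/2$ excess in your threshold is the Sobolev-vs-Besov loss of Remark \ref{rem:sob}, not an artifact of ``summability in the dyadic parameter'' as you claim.

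Two further technical remarks. First, your stated weighted-$L^{2}$ estimate has the scaling backwards: with $F_{k}(s)=F(s)\psi(2^{k}s)$ supported at $s\sim 2^{-k}$, the operator $F_{k}(H)$ is localized at $H\sim 2^{-k}$, hence at frequency $|\xi|\sim 2^{-k/2}$ and spatial scale $2^{k/2}$; the Gaussian weight should therefore be $e^{c\,2^{-k}|x-y|^{2}}$ and the right-hand side $\sim 2^{-kn/2}$, not $e^{c\,2^{k}|x-y|^{2}}$ and $2^{kn/2}$ (the factor you attach to the $y$-increment should likewise be $2^{-k/2}|y-y'|$). Second, rather than verifying the full H\"ormander condition and invoking black-box Calder\'on--Zygmund theory, the paper works directly with the three pieces of a Calder\'on--Zygmund decomposition of $f$ (formula \eqref{eq:3pieces}), splitting each bad piece into a low-frequency part $\psi_{r_{j}}(\sqrt{H})f_{j}$ handled by the pointwise kernel bound of Lemma \ref{lem:heb2}, and a high-frequency tail handled by the Schur-weighted bounds of Lemma \ref{lem:heb1}; this piecewise structure is what makes the constants in \eqref{eq:hebweak} quantitatively trackable in $K_{0}$, $\mu$, and $\|g\|_{L^{\infty}}$, which is needed later for the weighted estimates of Theorem \ref{the:1}.
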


Theorem \ref{the:hebischth} raises 
a few interesing questions concerning
the optimality of the assumptions and the possibility of
\emph{weighted} $L^{p}$ estimates for suitable classes
of operators. In the case $H=-\Delta$,
the classical H\"ormander mutliplier theorem
requires only $a>n/2$ in \eqref{eq:horm}, and in this sense
the result is not optimal. Indeed, sharper results were
obtained for bounded functions of homogeneous Laplace operators
acting on homogeneous groups or on groups of polynomial growth
(see
\cite{De-MicheleMauceri87-a},
\cite{Christ91-a},
\cite{MauceriMeda90-a},
\cite{Alexopoulos94-a}). 
In these results the conditions on the
function $F$
were sharpened to
\begin{equation}\label{eq:condsharp1}
  \sup_{t>0}\|\phi S_{t}F\|_{H^{a}_{p}}<\infty
  \quad\text{for some }\quad a>\frac{n}{2}
\end{equation}
where $H^{a}_{p}$ is the Sobolev space with norm
$\|(1-d^{2}/dx^{2})^{\frac a2}f\|_{L^{p}}$, and $p$ is equal to
2 or $\infty$.
The criticality of the order $a=n/2$ was proved
by Sikora and Wright \cite{SikoraWright01-a}
in the special case of imaginary powers 
$L^{iy}$, with $L$ a positive selfadjoint operator of the form
\begin{equation*}
  L=-\sum \partial_{i}a_{ij}\partial_{j}.
\end{equation*}
They obtained
\begin{equation}\label{eq:sikw}
  \|L^{iy}\|_{L^{1}\to L^{1,\infty}}\simeq(1+|y|)^{\frac n2}
\end{equation}
provided $L$ satisfies, besides the gaussian estimate,
a \emph{finite speed of propagation} property, meaning that
the operator $\cos(t \sqrt{L})$ has an integral kernel
$K_{t}(x,y)$
supported in the ball $|x-y|\le t$ for all $t\ge0$.
Notice that
the norm \eqref{eq:horm} for $a=n/2$ and $F(s)=s^{iy}$ grows
precisely like $(1+|y|)^{\frac n2}$.
It was later remarked by Sikora
\cite{Sikora04-a} that the finite speed of propagation
is redundant and actually equivalent to a weaker
Gaussian bound, the so-called Davies-Gaffey
$L^{2}$ estimate (see Remark \ref{rem:finitesp} below).

Condition \eqref{eq:condsharp1} was further improved 
by Duong, Ouhabaz and Sikora
\cite{DuongOuhabazSikora02-a}. They obtained a general result
for functions of a selfajoint, positive operator $L$
on $L^{2}(X,\mu)$ where $X$ is any open subset of a
space of homogeneous type, $\mu$ a doubling measure,
and $L$ satisfies a generalized pointwise gaussian estimate
analogous to \eqref{eq:ggauss}. In particular they obtained that
if $F$ is bounded and satisfies
\eqref{eq:condsharp1} with $p=\infty$, then
$F(L)$ is of weak type $(1,1)$ and bounded on
all $L^{q}$, $1<q<\infty$. On the other hand, if
\eqref{eq:condsharp1} holds for some $p\in[2,\infty)$,
the same result holds provided $L$ satisfies an additional
a priori condition of Plancherel type on the kernel
of $F(\sqrt{L})$; see \cite{DuongOuhabazSikora02-a}
for further results and an extensive bibliography.

Our main purpose here is to extend these results, at
least in the euclidean setting, to the case of 
\emph{weighted} $L^{p}$ spaces. However, in order to
develop our techniques, we shall first prove a precised
version of Theorem \ref{the:hebischth}, building on
the ideas of
\cite{Hebisch90-b}, \cite{SikoraWright01-a}.
Concerning the operator $H$, as in
Hebisch' result, we shall only require a gaussian bound;
for further reference we state the condition as

\textsc{Assumption (H)}. $H$ is a nonnegative selfadjoint
operator on $L^{2}(\mathbb{R}^{n})$ satisfying
a gaussiam heat kernel estimate
\begin{equation}\label{eq:heatkernH}
  |p_{t}(x,y)|\le \frac{K_{0}}{t^{n/2}}
    e^{-|x-y|^{2}/(dt)},\qquad d>0.
\end{equation}
A rescaling $H\to\lambda H$ shows that it is
not restrictive to assume $d=1$.

\begin{remark}\label{rem:condHsuff}
  In section \ref{sec:electromagnetic} we shall 
  exhibit a wide class of operators satisfying (H),
  namely the electromagnetic Schr\"odinger operators
  \begin{equation}\label{eq:H}
    H=(i \nabla-A(x))^{2}+V(x)
  \end{equation}
  under very weak conditions on the potentials: more precisely,
  it is sufficient to assume that $A\in L^{2}_{loc}$ and
  that $V$ is in the Kato class
  with a negative part $V_{-}$ small enough.
  For related results on magnetic Schr\"odinger operators
  see also \cite{Ben-Ali09-a}.
\end{remark}

In order to express the smoothness conditions in an
optimal way, we shall introduce two norms on functions
defined on the positive real line. In the rest of the paper
we fix a cutoff $\psi\in C^{\infty}_{c} (\mathbb{R})$
with support in $[-2,2]$ and equal to $1$ on $[-1.1]$,
and denote with $\phi$ the function, supported in $[1/2,2]$,
\begin{equation}\label{eq:phi}
  \phi(s)=
  \begin{cases}
     \psi(s)-\psi(2s)&\text{if $ s>0 $,}\\
     0&\text{if $ s\le0 $.}
  \end{cases}
\end{equation}
As a consequence, notice the identities for $s>0$
\begin{equation}\label{eq:expa}
  \psi(s)=\sum_{k>0}\phi(2^{k}s),\qquad
  1-\psi(s)=\sum_{k\le0}\phi(2^{k}s).
\end{equation}
Then, writing $\bra{\xi}=(1+|\xi|^{2})^{1/2}$,
the norms $\mu_{a},\mu'_{a}$ will be defined as
\begin{equation}\label{eq:assbddg}
  \mu_{a}(g)=
  \sup_{\lambda>0}\|\bra{\xi}^{a}\mathcal{F}[\phi(s)
  S_{\lambda}g]\|_{L^{1}},\qquad
  \mu_{a}'(g)=
  \sup_{\lambda>0}\|\bra{\xi}^{a}\mathcal{F}[s\phi(s)
  S_{\lambda}g]\|_{L^{1}}.
\end{equation}

\begin{remark}\label{rem:sob}
  It is easy to control $\mu_{a}$ with ordinary
  Besov or Sobolev norms:
  \begin{equation}\label{eq:besovbd}
    \mu_{a}(g)\le c(n)
    \sup_{t>0}
    \| \phi S_{t}g\|_{B^{a+\frac{1}{2}}_{2,1}}\le
    c(n,\epsilon)
    \sup_{t>0}
    \| \phi S_{t}g\|_{H^{a+\frac{1}{2}+\epsilon}},
    \qquad \epsilon>0.
  \end{equation}
  The last norm in \eqref{eq:besovbd} is the one used
  in Theorem \ref{the:hebischth}, and using $\mu_{a}$ instead
  allows to eliminate the $1/2+$ loss of smoothness 
  in Hebisch' result.
\end{remark}

Our first result is the following:

\begin{theorem}\label{the:hebweakest}
  Let $H$ be an operator satisfying (H) and
  $g(s)$ a function on $\mathbb{R}^{+}$ 
  with $\mu=\mu_{\sigma}(g)<\infty$
  for some $\sigma>n/2$. Then the following weak $(1,1)$ estimate
  holds:
  \begin{equation}\label{eq:hebweak}
    \|g(\sqrt{H})f\|_{L^{1,\infty}}\le
    C\|f\|_{L^{1}},\qquad
    C=c(n,\sigma)K_{0}^{4}(1+\mu+\|g\|_{L^{\infty}}^{2}),
  \end{equation}
  and for all $1<p<\infty$, with the same $C$,
  \begin{equation}\label{eq:hebLp}
    \|g(\sqrt{H})f\|_{L^{p}}\le
    6C \left(p+(p-1)^{-1}\right)
    \|f\|_{L^{p}}
  \end{equation}
  If in addition we assume that for some $q>1$ the following
  estimate holds:
  \begin{equation}\label{eq:hebassnew}
    \|\sqrt{H}g(\sqrt{H})f\|_{L^{q}}\le C_{q}\|\nabla f\|_{L^{q}},
  \end{equation}
  and $\mu'=\mu_{\sigma}'(g)<\infty$ for a $\sigma>1+n/2$,
  then we have also
  \begin{equation}\label{eq:hebweaknew}
    \|\sqrt{H}g(\sqrt{H})f\|_{L^{1,\infty}}\le
    C\|\nabla f\|_{L^{1}},\qquad
    C=c(n,\sigma,C_{q})K_{0}^{4}(1+\mu'+\|g\|_{L^{\infty}}^{2}),
  \end{equation}
  and for all $1<p\le q$, with the same $C$,
  \begin{equation}\label{eq:hebLpnew}
    \|\sqrt{H}g(\sqrt{H}) f\|_{L^{p}}\le
    \frac{c(q)}{p-1}C
    \|\nabla f\|_{L^{p}}.
  \end{equation}
\end{theorem}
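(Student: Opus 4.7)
I follow the Calder\'on--Zygmund template for spectral multipliers of Hebisch \cite{Hebisch90-b} and Sikora--Wright \cite{SikoraWright01-a}, using the finer dyadic norm $\mu_\sigma$ in place of the Sobolev norm in \eqref{eq:horm}. Fix $f\in L^1$ and $\alpha>0$, and apply the standard CZ decomposition at level $\alpha$: $f=G+b$, $b=\sum_Q b_Q$, with each $b_Q$ supported in a cube $Q$ of side $\ell_Q$, $\int b_Q=0$, $\|b_Q\|_1\le c\alpha|Q|$, $\|G\|_\infty\le c\alpha$, $\|G\|_1\le\|f\|_1$, and $\sum_Q|Q|\le c\|f\|_1/\alpha$. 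The good part is dispatched via the spectral $L^2$ bound $\|g(\sqrt H)G\|_{L^2}\le\|g\|_\infty\|G\|_{L^2}$ and Chebyshev, contributing $c\|g\|_\infty^2\|f\|_1/\alpha$. With $Q^{*}=2\sqrt n\,Q$ and $E=\bigcup Q^{*}$, we have $|E|\le c\|f\|_1/\alpha$, so it remains to bound $\sum_Q\int_{\mathbb R^n\setminus Q^{*}}|g(\sqrt H)b_Q(x)|\,dx$ by $c\|f\|_1$.

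\textbf{Core off-diagonal estimate.} For each cube $Q$ decompose $g$ at its characteristic scale $\ell_Q$: using \eqref{eq:expa}, write $g=\sum_{k\in\mathbb Z}g_k$ with $g_k(s)=g(s)\phi(2^{-k}\ell_Q s)$, so that $g_k$ is localized at $s\sim 2^k/\ell_Q$. Setting $\lambda_k=\ell_Q/2^k$, Fourier inversion yields
\begin{equation*}
g_k(\sqrt H)=\frac{1}{2\pi}\int_{\mathbb R}\mathcal F[\phi\,S_{\lambda_k}g](\xi)\,e^{i\xi\lambda_k\sqrt H}\,d\xi.
\end{equation*}
Under (H), $\cos(t\sqrt H)$ has finite speed of propagation (Sikora \cite{Sikora04-a}); hence $e^{i\xi\lambda_k\sqrt H}b_Q$ is essentially $L^2$-concentrated in the $|\xi|\lambda_k$-neighbourhood of $Q$. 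Coupling this with the $L^2\to L^\infty$ bound from (H) produces an annular estimate
\begin{equation*}
\int_{|x-x_Q|\sim 2^j\ell_Q}|g_k(\sqrt H)b_Q|\,dx\le c_{j,k}K_0^4\bigl(1+\mu+\|g\|_\infty^2\bigr)\|b_Q\|_1,
\end{equation*}
with $c_{j,k}$ summable in $j\ge1$ and $k\in\mathbb Z$ precisely when $\sigma>n/2$: the weight $\bra{\xi}^\sigma$ in $\mu_\sigma$ supplies the decay in $j$ via integration by parts, the quadratic factor $\|g\|_\infty^2$ comes from $L^2$ interpolation between heat kernel and wave propagation, and $K_0^4$ from iterating the Gaussian bound. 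Summing in $j,k,Q$ and applying Chebyshev proves \eqref{eq:hebweak}; estimate \eqref{eq:hebLp} then follows by Marcinkiewicz interpolation with the trivial $L^2$ bound and by duality for $p>2$, the $p+(p-1)^{-1}$ being the standard Marcinkiewicz constant.

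\textbf{Gradient version and main difficulty.} For \eqref{eq:hebweaknew}--\eqref{eq:hebLpnew} the same scheme is applied to $T=\sqrt H g(\sqrt H)$: the good-part bound now uses \eqref{eq:hebassnew} in place of the spectral $L^2$ bound, while for the bad part one exploits the cancellation $\int b_Q=0$ to represent $b_Q=\operatorname{div}\vec B_Q$ with $\|\vec B_Q\|_1\le c\ell_Q\|b_Q\|_1$ via a Bogovskii-type solution. The extra factor $\ell_Q$ absorbs the missing power of $\sqrt H$ and matches the extra factor $s$ in the definition of $\mu'_\sigma$, accounting for the upward shift $\sigma>1+n/2$; the core off-diagonal estimate above then applies with $g(s)$ replaced by $sg(s)$. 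The main obstacle, in my view, is the precise tracking of constants in the off-diagonal bound: one must balance the smoothness gain from $\bra{\xi}^\sigma$ against the cumulative loss from wave propagation and from the $L^2\cdot L^\infty$ interpolation so as to obtain the sharp dependence $K_0^4(1+\mu+\|g\|_\infty^2)$. It is exactly this refined bookkeeping, combined with the $L^1$-based norm $\mu_\sigma$, that removes the $1/2+\epsilon$ loss present in Theorem \ref{the:hebischth}.
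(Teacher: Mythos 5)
There is a genuine gap in the off-diagonal estimate at low frequencies. You decompose $g=\sum_{k\in\mathbb Z}g_k$ with $g_k$ localized at $s\sim 2^k/\ell_Q$, i.e.\ kernel concentration scale $\lambda_k=\ell_Q/2^k$, and assert that the resulting constants $c_{j,k}$ are summable in $j\ge 1$ \emph{and} $k\in\mathbb Z$ whenever $\sigma>n/2$. This fails for $k<0$: there $\lambda_k\gg\ell_Q$, so the weighted Schur bound furnished by $\mu_\sigma$ (via Lemma~\ref{lem:heb1}) gives a decay factor of the form $\bra{(x-y)/\lambda_k}^{-a}$ which is $\simeq 1$ on every annulus $|x-x_Q|\sim 2^j\ell_Q$ with $j\lesssim -k$. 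The Schur norm of $g_k(\sqrt H)$ is uniformly of size $cK_0\mu$, so the $k$-sum over negative $k$ diverges. You invoke $\int b_Q=0$ to save this, but that would require a H\"ormander-type modulus-of-continuity bound on the kernel of $g_k(\sqrt H)$ in the $y$-variable, and $\mu_\sigma$ only controls integrals of $|g_k(\sqrt H)(x,y)|$ against a polynomial weight in $x-y$; assumption (H) by itself does not upgrade this to a Lipschitz estimate in $y$. Deriving such a bound is precisely the obstacle that Hebisch's method is designed to circumvent.

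What the paper does instead (and you do not): it never decomposes into frequencies below the inverse cube side, and it never uses the mean-zero property. Each bad piece $f_j$ is split as $g(\sqrt H)\psi_{r_j}(\sqrt H)f_j+g(\sqrt H)(1-\psi_{r_j}(\sqrt H))f_j$, where $\psi_{r_j}$ is a smooth low-frequency cutoff at the cube scale. The low-frequency part is handled not by kernel off-diagonal decay of $g(\sqrt H)$ but by the pointwise Gaussian-tail bound on the kernel of $\psi_{r_j}(\sqrt H)$ from Lemma~\ref{lem:heb2} (available directly from the heat kernel bound), followed by an $L^2$/maximal-function duality argument; $g(\sqrt H)$ contributes only $\|g\|_\infty^2$ through the spectral theorem. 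Only the high-frequency remainder $(1-\psi_{r_j})g$ is further decomposed dyadically, and there $k$ runs over $k\le 0$ with scales $2^k r_j\le r_j$, making the $k$-sum geometric with ratio $2^{-a}$, $a=\sigma-n/2>0$. Your proposal is missing this mid-range $L^2$ mechanism; without it the low-frequency sum does not close. For the gradient version, your Bogovskii-type representation is also not what the paper uses — it employs Auscher's Calder\'on--Zygmund decomposition for Sobolev functions, producing $f_j\in W^1_0(Q_j)$ and then applying Poincar\'e to absorb the extra $r_j^{-1}$; either route could plausibly work, but the low-frequency gap above affects both parts of your argument.
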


\begin{remark}\label{rem:comparison}
  As mentioned above, in \cite{DuongOuhabazSikora02-a}
  it was proved that the weak $(1,1)$ estimate holds
  under the sole assumption
  \begin{equation*}
    \sup_{t>0}\|\phi S_{t}g\|_{H^{a}_{\infty}}<\infty
  \end{equation*}
  for some $a>n/2$
  (see Theorem 3.1 and Remark 1 in that paper). Since 
  obviously
  \begin{equation*}
    \sup_{t>0}\|\phi S_{t}g\|_{H^{a}_{\infty}}\lesssim
    \mu_{a}(g),
  \end{equation*}
  we see that estimate
  \eqref{eq:hebweak} can be obtained as a special case of that
  result, with a slightly different form of the constant
  which we made explicit in terms of the gaussian constant $K_{0}$.
  On the other hand, 
  estimate \eqref{eq:hebLpnew}, which uses
  Auscher's Calderon-Zygmund decomposition for Sobolev
  functions \cite{Auscher07-a}, seems to be new.
\end{remark}

\begin{remark}\label{rem:compareheb}
  As evidenced by the previous discussion,
  the constant in \eqref{eq:hebweak} is close to optimal
  in the following sense: if we choose $g(s)=s^{2iy}$, we have
  \begin{equation*}
    \mu_{a}(g)\le C(1+|y|)^{a},\qquad a\ge0;
  \end{equation*}
  (the proof is trivial for integer values of
  $a$ and follows by interpolation for
  real values). This implies that, for all $\epsilon>0$
  and $1<p<\infty$,
  \begin{equation}\label{eq:Hiy}
    \|H^{iy}f\|_{L^{p}}\le C(p,n,\epsilon)(1+|y|)^{\frac n2+\epsilon}
    \|f\|_{L^{p}}
  \end{equation}
  which is close to the optimal bound \eqref{eq:sikw}.
  Notice also that the strict condition $\sigma>n/2$ can be further
  optimized to a logarithmic condition, but we prefer not
  to pursue this idea here.
\end{remark}

After it was made clear by the results of Hebisch
and others that kernel smoothness
is not a necessary condition for $L^{p}$ boundedness,
alternative weaker conditions where thoroughly investigated,
also in connection with the Kato problem. A fairly
complete answer was given by Auscher and Martell 
who developed a general theory in a series of papers 
(see in particular \cite{AuscherBen-Ali07-a}, 
\cite{AuscherMartell06-a} and the references therein).
By combining the techniques of Auscher and Martell
with ideas from the proof of Theorem \ref{the:hebweakest}, we are
able to extend the previous estimates
to weighted spaces $L^{p}(w)$. 
In the following we use the notation
\begin{equation*}
  \|f\|_{L^{p}(w)}=\left(\int|f|^{p}w(x)dx\right)^{1/p}
\end{equation*}
and we recall that a measurable function $w(x)>0$ belongs to the
\emph{Muckenhoupt class} $A_{p}$, $1<p<\infty$,
if the quantity
\begin{equation}\label{eq:Ap}
  \|w\|_
  {A_{p}}=\sup_{Q \ \text{cube}}
  \left(\dashint_{Q}w \right)
  \left(\dashint_{Q}w^{1-p'}\right)^{p/p'}<\infty.
\end{equation}
is finite. Then the main result of this paper is

\begin{theorem}\label{the:1}
  Let $H$ be an operator satisfying (H), and let
  $g$ be a bounded function on $\mathbb{R}^{+}$ such that
  $\mu=\mu_{\sigma}(g)$ is finite for some $\sigma>n/2$.
  Then, given any $1<p<\infty$ and any weight $w\in A_{p}$,
  the operator $g(\sqrt{H})$ satisfies, for all $1<q<\infty$
  with $q>p\cdot\max\{1,n/\sigma\}$
  \begin{equation}\label{eq:FH}
    \|g(\sqrt{H})f\|_{L^{q}(w)}\le 
    c(n,\sigma,p,\psi,w)K_{0}^{1+2p^{2}} 
    (1+\mu+\|g\|_{L^{\infty}}^{2})\cdot q \cdot
    \|f\|_{L^{q}(w)}.
  \end{equation}
\end{theorem}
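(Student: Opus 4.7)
The proof proceeds by invoking the weighted extrapolation machinery of Auscher and Martell \cite{AuscherMartell06-a}, combined with the pieces already established in Theorem \ref{the:hebweakest}. Writing $T=g(\sqrt{H})$, the plan is to use the semigroup approximation
\[
\mathcal{A}_{r}=I-(I-e^{-r^{2}H})^{m},\qquad m\ge\lceil\sigma\rceil+1,
\]
and the exponent $p_{0}=\max\{1,n/\sigma\}\in[1,2)$. The Auscher--Martell theorem reduces \eqref{eq:FH} to proving, for every ball $B$ of radius $r_{B}$, the two pointwise estimates
\[
\left(\dashint_{B}|T(I-\mathcal{A}_{r_{B}})f|^{p_{0}}\right)^{1/p_{0}}\le\sum_{j\ge1}\alpha_{j}\bigl(M|f|^{p_{0}}\bigr)^{1/p_{0}}(y),
\]
\[
\left(\dashint_{B}|T\mathcal{A}_{r_{B}}f|^{2}\right)^{1/2}\le\sum_{j\ge1}\alpha_{j}\bigl(M|Tf|^{p_{0}}\bigr)^{1/p_{0}}(y),
\]
valid for a.e.\ $y\in B$ with a summable sequence $\alpha_{j}$; together with the $L^{p}$-boundedness of $T$ from \eqref{eq:hebLp}, this yields $T\colon L^{q}(w)\to L^{q}(w)$ for every $q>p_{0}$ and $w\in A_{q/p_{0}}$, and the hypothesis $w\in A_{p}$ with $q>p\,p_{0}$ forces $w\in A_{q/p_{0}}$, delivering \eqref{eq:FH}.

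The second bound is the easier one: after the binomial expansion $\mathcal{A}_{r_{B}}=\sum_{k=1}^{m}\binom{m}{k}(-1)^{k+1}e^{-kr_{B}^{2}H}$, the pointwise Gaussian estimate (H) dominates each $e^{-kr_{B}^{2}H}(Tf)$ by $M(|Tf|^{p_{0}})^{1/p_{0}}(y)$ for $y\in B$, and the $L^{2}$-boundedness of $T$ upgrades the $L^{1}$-average to the required $L^{2}$-average on $B$. The first bound is the analytically substantial step: decomposing $f=\sum_{j\ge1}f_{j}$ with $f_{j}=f\chi_{S_{j}(B)}$ on the dyadic annuli $S_{j}(B)=2^{j+1}B\setminus 2^{j}B$, it reduces to annular $L^{p_{0}}\to L^{p_{0}}$ off-diagonal bounds for $T(I-e^{-r_{B}^{2}H})^{m}$ from $S_{j}(B)$ to $B$, with coefficients $\alpha_{j}$ decaying faster than the doubling factor $2^{jn/p_{0}}$.

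To obtain those off-diagonal bounds I would adapt the dyadic decomposition behind Theorem \ref{the:hebweakest}: write $g=g\psi+\sum_{k}g\phi(2^{k}\cdot)$ via \eqref{eq:expa} and analyse each piece at scale $\lambda\sim 2^{-k}$. When $\lambda\ll r_{B}$, the factor $(1-e^{-r_{B}^{2}s^{2}})^{m}$ provides $m$ orders of vanishing at $s=0$, yielding an extra decay $(\lambda/r_{B})^{2m}$; when $\lambda\gg r_{B}$, one uses the Fourier representation through the wave propagator $\cos(\tau\sqrt{H})$, which by Sikora's equivalence recalled in the introduction has integral kernel supported in $\{|x-y|\le|\tau|\}$, forcing the relevant $\tau$-integration to $|\tau|\gtrsim 2^{j}r_{B}$ when acting between $B$ and $S_{j}(B)$. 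The smoothness budget $\mu_{\sigma}(g)$ with $\sigma>n/2$ then produces decay $\alpha_{j}\lesssim 2^{-j\sigma}$, which dominates $2^{jn/p_{0}}$ precisely by the choice $p_{0}=n/\sigma$.

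The main obstacle will be the careful bookkeeping of constants, to reach the claimed form $K_{0}^{1+2p^{2}}(1+\mu+\|g\|_{L^{\infty}}^{2})$: one must track the explicit dependence on $K_{0}$, $\mu$, and $\|g\|_{L^{\infty}}$ through both the dyadic decomposition of $g$ and the extrapolation procedure, where the exponent $2p^{2}$ on $K_{0}$ reflects the weighted interpolation step used to reach the full $A_{p}$ range from the weak-$(1,1)$ endpoint. The threshold $\sigma>n/2$ and the restriction $q>p\max\{1,n/\sigma\}$ emerge precisely as the conditions for the dyadic sum to converge absolutely with the chosen $p_{0}$.
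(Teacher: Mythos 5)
Your high-level plan---reduce to the Auscher--Martell good-$\lambda$ inequality, split $g(\sqrt{H})f$ into a ``local'' piece controlled by a maximal function and a ``far-field'' piece with off-diagonal decay---matches the paper's strategy, but the specific implementation diverges in a way that creates a real gap. The paper takes as its approximation operator the \emph{spectral cutoff} $\psi_{r}(\sqrt{H})=\psi(r\sqrt{H})$, not the semigroup polynomial $\mathcal{A}_{r}=I-(I-e^{-r^{2}H})^{m}$. That choice is not cosmetic: by Lemma~\ref{lem:heb2} the kernel of $\psi_{r}(\sqrt{H})$ decays like $\langle(x-y)/r\rangle^{-n-1}r^{-n}$ pointwise, so the ``local'' term $H_{B}=2^{\nu}|g(\sqrt{H})\psi_{r}(\sqrt{H})f|^{\nu}$ can be verified in condition~\eqref{eq:cond2} of Lemma~\ref{lem:auscapp} with $q=\infty$, i.e.\ a pointwise $L^{\infty}(B)$ bound by $MF(x)$. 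This $q=\infty$ is essential: it is what makes the admissible range of exponents in the conclusion run all the way up to infinity.

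The central problem with your version is that you take $q_{0}=2$ in the second (local) condition. The Auscher--Martell theorem with inner exponents $(p_{0},q_{0})$ only yields boundedness on $L^{q}(w)$ for $p_{0}<q<q_{0}$ and $w\in A_{q/p_{0}}\cap RH_{(q_{0}/q)'}$. With $q_{0}=2$ you are confined to $q<2$, which does not give the stated range $q>p\max\{1,n/\sigma\}$ for arbitrary $p$. To recover the full range you would need $q_{0}=\infty$, which in turn requires an $L^{\infty}(B)$ bound on $T\mathcal{A}_{r_{B}}f$ rather than an $L^{2}$-average; this is exactly what the Gaussian kernel of $e^{-kr_{B}^{2}H}$ gives you, and once you have a pointwise bound the ``$L^{2}$-boundedness of $T$ upgrades the $L^{1}$-average'' step you mention is neither needed nor meaningful.

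There are secondary issues as well. You set $p_{0}=\max\{1,n/\sigma\}$ exactly, but then the dyadic tail $\sum_{j}\alpha_{j}2^{jn/p_{0}}$ has $\alpha_{j}\sim2^{-j\sigma}$, so the sum is borderline divergent; you must take $p_{0}$ strictly larger than $n/\sigma$, which is precisely why the paper introduces the free parameter $\nu>\max\{1,n/\sigma\}$ and lets $q=p\nu$ range over the open interval. The description of when $(1-e^{-r^{2}s^{2}})^{m}$ provides extra vanishing is also reversed: vanishing is at $s\to0$, i.e.\ it damps the \emph{low-frequency} dyadic pieces ($s\sim2^{-k}/r$ with large scale parameter), not the high-frequency ones; the paper sidesteps this entirely because the cutoff $1-\psi_{r}$ already kills the low frequencies cleanly via the Littlewood--Paley decomposition \eqref{eq:expa}, feeding directly into the scaling Lemma~\ref{lem:heb1}. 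Finally, the exponent $K_{0}^{1+2p^{2}}$ in the paper does not come from ``weighted interpolation from the weak-$(1,1)$ endpoint''; it comes from tracking $K_{0}$ through the kernel bounds of Lemmas~\ref{lem:heb1}, \ref{lem:heb2}, the unweighted constant of Theorem~\ref{the:hebweakest}, and the exponent $a^{ps/(1-ps/q)}$ in the maximal lemma's conclusion \eqref{eq:1lemAM}.
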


\begin{remark}\label{rem:Ap}
  It is well known that if $w\in A_{p}$ for some $p>1$, then we have 
  also $w\in A_{p-\epsilon}$ for some $\epsilon>0$ depending only
  on $\|w\|_{A_{p}}$ (for a quantitative estimate of $\epsilon$
  see \cite{Kinnunen98-a}). Thus in the statement of Theorem \ref{the:1}
  the condition on $q$ can be relaxed to
  \begin{equation}\label{eq:condq}
    q>(p-\epsilon)\max\left\{1,\frac n \sigma\right\}.
  \end{equation}
  In particular, if $\sigma\ge n$, we have that $g(\sqrt{H})$
  is bounded on $L^{q}(w)$ for all $w\in A_{p}$ and all $q>p-\epsilon$,
  which includes the case $q\ge p$.
\end{remark}

\begin{remark}\label{rem:origmot}
  The original motivation for the present work was the need for an
  estimate
  \begin{equation}\label{eq:weiginitial}
    \|\bra{x}^{-1-\epsilon}H^{\theta}g\|_{L^{2}}\le
    C(V)  \|\bra{x}^{-1-\epsilon}(-\Delta)^{\theta}g\|_{L^{2}},\qquad
    \theta=\frac14,\qquad
    H=-\Delta+V(x)
  \end{equation}
  for fractional powers of a selfadjoint Schr\"odinger operator
  $H$, with explicit bounds on the constant $C(V)$. 
  For the case $\theta=1/2$, and operators in divergence
  form, similar estimates are included in the results of
  \cite{AuscherMartell06-a} (see also \cite{AuscherBen-Ali07-a})
  concerning reverse estimates for square roots of an elliptic operator.
  However, other values of $\theta$, different forms of $H$,
  and the need for precise bounds on the constant, forced us to
  go beyond the existing theory.

  It may be interesting to recall briefly
  the line of investigation leading to \eqref{eq:weiginitial}.
  An analysis of the dispersive properties of Schr\"odinger
  equations on non-flat waveguides (i.e. perturbations of
  domains of the form $\mathbb{R}^{n}\times \Omega$
  with $\Omega$ a bounded open set, see 
  \cite{DanconaRacke10-a} for details)
  leads to a family of
  perturbed Schr\"od\-in\-ger equations
  \begin{equation}\label{eq:probj}
    iu_{t}+\Delta_{x}u-V_{j}(x)u=0,\qquad
    u(0,x)=f_{j}(x),\qquad j\ge1,\ x\in \mathbb{R}^{n}.
  \end{equation}
  Here $u=u_{j}$ is a component of the expansion in
  a distorted Fourier series of a
  function $u(t,x,y)=\sum \phi_{j}(y)u_{j}(t,x)$.
  Writing for short
  $H_{j}=-\Delta+V_{j}$ and representing the solution as
  \begin{equation*}
    u_{j}=e^{it H_{j}}f_{j},
  \end{equation*}
  one expects to estimate each component separately
  and sum over $j$. Notice that a precise bound on the
  growth in $j$ of the constants is essential, since
  this will translate into $y$-derivatives after summing over $j$.
  To this end we can use
  \emph{smoothing estimates} of the form
  \begin{equation}\label{eq:smoo12}
    \|\bra{x}^{-1-\epsilon}(-\Delta)^{1/4}e^{itH_{j}}f_{j}
          \|_{L^{2}_{t}L^{2}_{x}}\le
       C\|H_{j}^{1/4}f_{j}\|_{L^{2}}.
  \end{equation}
  which can be proved by multiplier techniques and
  give a complete control on the growth of the constants,
  and then deduce, in a standard way,
  \emph{Strichartz estimates}, which are the basci tool for
  applications to nonlinear problems. This is possible
  provided we can ``simplify'' the
  powers of $-\Delta$ and $H_{j}$ appearing in \eqref{eq:smoo12}
  and obtain the $L^{2}$--level estimate
  \begin{equation}\label{eq:smoo}
    \|\bra{x}^{-1-\epsilon}e^{itH_{j}}f_{j}\|_{L^{2}_{t}L^{2}_{x}}\le
       C\|f_{j}\|_{L^{2}}.
  \end{equation}
  But of course $(-\Delta)^{1/4}$ and $e^{itH_{j}}$
  do not commute, hence this step is not trivial.
  We need a \emph{weighted $L^{2}$ estimate} of the form
  \begin{equation}\label{eq:weigbase}
    \|\bra{x}^{-1-\epsilon}H_{j}^{1/4}g\|_{L^{2}}\le
    C(V_{j})  \|\bra{x}^{-1-\epsilon}(-\Delta)^{1/4}g\|_{L^{2}}
  \end{equation}
  so that we can replace $(-\Delta)^{1/4}$ by 
  $H_j^{1/4}$ in the LHS of \eqref{eq:smoo12},
  commute it with $e^{itH_{j}}$, and obtain
  \eqref{eq:smoo}. From the previous discussion, it is clear that
  we need also a precise control on the constant in \eqref{eq:weigbase}.
  
  Our weighted estimates, via complex interpolation, allow us
  to give a partial answer to
  the original problem \eqref{eq:weiginitial}. Indeed,
  for a Schr\"odinger operator on $\mathbb{R}^{n}$, $n\ge3$
  \begin{equation*}
    H=-\Delta+V(x),\qquad V\ge0
  \end{equation*}
  we obtain the bounds
  \begin{equation}\label{eq:explV}
    \|\bra{x}^{-s}H^{\theta}f\|_{L^{p}}\le
    C(n,p,s)\cdot
    \left[
    1+\|V\|_{L^{n/2,\infty}}
    \right]^{\theta}\cdot
    \|\bra{x}^{-s}(-\Delta)^{\theta}f\|_{L^{p}}
  \end{equation}
  for all $\theta,p,s$ in the range
  \begin{equation*}
    0\le\theta\le 1,\qquad
    1<p<\frac{n}{2 \theta},\qquad
    s>-\frac np.
  \end{equation*}
  More generally, we can prove
  (see the beginning of Section \ref{sec:electromagnetic} for 
  the definition of Kato classes):
  
  \begin{corollary}\label{the:2}
    Consider the operator
    \begin{equation*}
      H=(i\nabla-A(x))^{2}+V(x)
    \end{equation*}
    on $L^{2}(\mathbb{R}^{n})$, $n\ge3$, under the assumptions that
    $A\in L^{2}_{loc}(\mathbb{R}^n, \mathbb{R}^n)$,
    $V_{+}=\max\{V,0\}$ is of Kato class,
    $V_{-}=\max\{-V,0\}$ has a small Kato norm
    \begin{equation}\label{eq:smallkato2}
      \|V_{-}\|_{K}<c_{n}=
      \frac{\pi^{\frac n2}}{\Gamma\left(\frac n2-1\right)},
    \end{equation}
    and
    \begin{equation}\label{eq:ass2A}
      |A|^{2}-i \nabla \cdot A+V\in L^{n/2,\infty},\qquad
      A\in L^{n,\infty}.
    \end{equation}
    Then $H$ satisfies assumption (H), and
    for all $0\le \theta\le 1$ the following estimate holds:
    \begin{equation}\label{eq:fract}
       \|H^{\theta}f\|_{L^{p}(w)}\le
       C\|(-\Delta)^{\theta} f\|_{L^{p}(w)}
    \end{equation}
    for all weights $w\in A_{p}$ provided
    \begin{equation*}
      1<p<\frac{n}{2 \theta}.
    \end{equation*}
    The constant in \eqref{eq:fract} has the form
    \begin{equation*}
      C=\frac{ C(n,p,w)}{(1-\|V_{-}\|_{K}/c_{n})^{c(p)}}
       \Bigl[1+\||A|^{2}-i \nabla \cdot A+V\|_{L^{n/2,\infty}}+
       \|A\|_{L^{n,\infty}}\Bigr]^{\theta}.
    \end{equation*}
  \end{corollary}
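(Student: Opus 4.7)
The plan has three parts: first verify hypothesis (H) for $H$, then prove the estimate \eqref{eq:fract} at the endpoint $\theta=1$ by direct expansion, and finally obtain the intermediate cases by Stein's complex interpolation, using Theorem \ref{the:1} to control the imaginary powers $H^{iy}$ on the weighted spaces. For (H) I would invoke the diamagnetic inequality together with the Feynman-Kac formula: setting $H_+=(i\nabla-A)^{2}+V_+$, the pointwise bound $|e^{-tH_+}(x,y)|\le e^{t\Delta}(x,y)$ already gives a Gaussian kernel estimate for $H_+$, and the perturbation by $-V_-$ is then handled by an iterative Dyson-type expansion whose convergence is guaranteed by the smallness condition \eqref{eq:smallkato2} on the Kato norm $\|V_-\|_K$ and produces a Gaussian constant $K_{0}$ comparable to $(1-\|V_-\|_K/c_n)^{-1}$.

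For the endpoint $\theta=1$, I would expand
\begin{equation*}
  Hf=-\Delta f-2iA\cdot\nabla f+Wf,\qquad W=|A|^{2}-i\nabla\cdot A+V,
\end{equation*}
and control the last two terms by $\|(-\Delta)f\|_{L^{p}(w)}$. Using the boundedness of Riesz transforms $R_{j}=\partial_{j}(-\Delta)^{-1/2}$ on $L^{p}(w)$ for $w\in A_{p}$, weighted Riesz-potential bounds $\|(-\Delta)^{-\alpha/2}g\|_{L^{p_\alpha}(w)}\lesssim\|g\|_{L^{p}(w)}$ with $1/p_\alpha=1/p-\alpha/n$ (valid in the Muckenhoupt setting when $1<p<n/\alpha$), and H\"older's inequality in weak-type Lorentz spaces to absorb the singular factors $|A|\in L^{n,\infty}$ and $W\in L^{n/2,\infty}$, I expect to arrive at
\begin{equation*}
  \|Hf\|_{L^{p}(w)}\le C(n,p,w)\bigl[1+\|W\|_{L^{n/2,\infty}}+\|A\|_{L^{n,\infty}}\bigr]\|(-\Delta)f\|_{L^{p}(w)}
\end{equation*}
in the range $1<p<n/2$.

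For the interior range $0<\theta<1$, I would consider the analytic family $T_z f=H^z(-\Delta)^{-z}f$ in the strip $0\le\mathrm{Re}\,z\le1$. On $\mathrm{Re}\,z=0$, Theorem \ref{the:1} applied to $g(s)=s^{2iy}$, together with the bound $\mu_\sigma(g)\lesssim(1+|y|)^{2\sigma}$ valid for any $\sigma>0$ (see Remark \ref{rem:compareheb}) and the logarithmic openness of $A_p$ (Remark \ref{rem:Ap}, taking $\sigma\ge n$ to include $q=p$), gives boundedness of $H^{iy}$ on $L^p(w)$ for every $w\in A_p$ with polynomial growth in $|y|$; similarly $(-\Delta)^{iy}$ is bounded by the classical weighted Mikhlin multiplier theorem, and hence $T_{iy}$ is bounded with polynomial growth. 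On $\mathrm{Re}\,z=1$, $T_{1+iy}=H\cdot T_{iy}\cdot(-\Delta)^{-1}$ is bounded by composing the $\theta=1$ estimate with the previous step, and its norm is controlled by $C(n,p,w)[1+\|W\|_{L^{n/2,\infty}}+\|A\|_{L^{n,\infty}}]$ times a polynomial in $|y|$. Stein's three-lines lemma then gives the desired estimate for $T_\theta$ on $L^{p_\theta}(w)$, where $p_\theta$ ranges exactly over $(1,n/(2\theta))$ as the endpoint exponents vary over $(1,\infty)$ and $(1,n/2)$.

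The main obstacle is the precise bookkeeping of constants required to reproduce the form stated in the corollary: the exponent $\theta$ on the bracket $[1+\|W\|_{L^{n/2,\infty}}+\|A\|_{L^{n,\infty}}]$ must emerge from the three-lines lemma, so only the $\mathrm{Re}\,z=1$ endpoint may contain that bracket (and only to the first power), while the $\mathrm{Re}\,z=0$ endpoint must contribute a potential-independent constant whose polynomial growth in $|y|$ is integrable against the Poisson kernel of the strip. The smallness hypothesis on $\|V_-\|_K$ enters only through $K_0$ in Theorem \ref{the:1}, producing the factor $(1-\|V_-\|_K/c_n)^{-c(p)}$ in the final constant after composition with the $p$-dependent power $K_0^{1+2p^{2}}$ appearing in \eqref{eq:FH}.
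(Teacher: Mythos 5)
Your overall roadmap — verify (H) via the diamagnetic inequality and a Kato-class perturbation argument, prove the endpoint $\theta=1$ by expanding $H$ and using weighted Riesz and Sobolev inequalities, then interpolate using the imaginary-power bounds from Theorem \ref{the:1} — matches the paper's structure. However, there is a genuine gap in your interpolation step: you use the family $T_{z}=H^{z}(-\Delta)^{-z}$ with a \emph{fixed} weight $w$ and then interpolate between $L^{p_{0}}(w)$ and $L^{p_{1}}(w)$ with $p_{1}<n/2$, $p_{0}$ arbitrary. This can only yield boundedness on $L^{p_{\theta}}(w)$ for weights $w$ lying in $A_{p_{1}}$, which is a strictly smaller class than the asserted $A_{p_{\theta}}$ whenever $p_{1}<p_{\theta}$. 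The problem is unavoidable: if $\theta$ is small, the target exponent $p_{\theta}$ can be much larger than $n/2$, while the $\theta=1$ edge forces $p_{1}<n/2$; a weight $w\in A_{p_{\theta}}$ need not belong to $A_{p_{1}}$, and the openness of $A_{p}$ only moves you infinitesimally below $p_{\theta}$, not all the way down to $p_{1}$.

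The paper resolves this by using \emph{Stein--Weiss interpolation with change of weight}, defining
\begin{equation*}
  T_{z}=w_{z}\,H^{z}(-\Delta)^{-z}\,w_{z}^{-1},\qquad
  w_{z}^{1/p_{z}}=w_{0}^{(1-z)/p_{0}}w_{1}^{z/p_{1}},\qquad
  \frac{1}{p_{z}}=\frac{1-z}{p_{0}}+\frac{z}{p_{1}},
\end{equation*}
with $w_{j}\in A_{p_{j}}$, so that each edge is an unweighted $L^{p_{j}}\to L^{p_{j}}$ bound. The final — and essential — step, which your proposal omits entirely, is to show that \emph{every} $w\in A_{p_{\theta}}$ can be realized as the interpolated weight $w_{\theta}$. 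This uses the Jones-type factorization $w=a\,b^{1-p}$ with $a,b\in A_{1}$; setting $w_{0}=a\,b^{1-p_{0}}$, $w_{1}=a\,b^{1-p_{1}}$ gives $w_{j}\in A_{p_{j}}$ and reproduces the prescribed $w$. Without this argument, your interpolation only reaches the subclass $A_{p_{1}}$ of target weights, not the full class $A_{p_{\theta}}$ that the corollary claims.

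A second, smaller remark: in your $T_{1+iy}$ estimate you compose $H\cdot T_{iy}\cdot(-\Delta)^{-1}$, but the paper instead absorbs the weight conjugations so that the purely imaginary powers act on an $L^{p_{j}}$ space without extra weight, which is what makes the bounds from Theorem \ref{the:1} directly applicable with the stated constants; with your unweighted family, the weight factors $(w_{0}^{1/p_{0}}w_{1}^{-1/p_{1}})^{iy}$ (which are unimodular and harmless) never appear, but neither does the mechanism that produces a bound valid for arbitrary $A_{p_{\theta}}$ weights.
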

\end{remark}

The paper is organized as follows. In section \ref{sec:kernel} we
build the necessary kernel estimates for functions of an
operator and apply them to the proof of the $L^{p}$
estimates of Theorem \ref{the:hebweakest};
sections \ref{sec:bddfunct}
is devoted to the proof of the main result, Theorem
\ref{the:1}, concerning weighted $L^{p}$ estimates;
the application
to magnetic Schr\'odinger operators is contained in sections
\ref{sec:electromagnetic} and
\ref{sec:th2}. We added an appendix containing
a slightly adapted version of the Auscher-Martell maximall lemma
in order to make the paper self contained.
In forthcoming papers we plan to apply our estimates to questions
of local smoothing and dispersion for evolution equations, in the
spirit of \cite{DanconaRacke10-a}, \cite{DanconaPierfeliceRicci10-a}.

\textit{Acknowledgments}. We would like to thank Dr.~The Anh Bui
for his useful remarks on the first version of the paper (see
\cite{Bui11-a} for related results). We are also grateful to
the Referee whose remarks led to substantial improvements in our
results.


\section{Kernel estimates and proof of Theorem \ref{the:hebweakest}}
\label{sec:kernel}  

Throughout the proof, $\phi$ and $\psi$ are the functions
fixed in \eqref{eq:phi}--\eqref{eq:expa}. 
Given an operator $A$ with kernel $A(x,y)$, we denote its 
Schur norm with
\begin{equation*}
  \|A\|=\|A(x,y)\|\equiv
  \max\left\{
  \sup_{x}\int |A(x,y)|dy,\quad
  \sup_{y}\int |A(x,y)|dx
  \right\};
\end{equation*}
notice the product inequality
\begin{equation}\label{eq:prodin}
  \|AB\|\le\|A\|\cdot\|B\|
\end{equation}
which follows from the identity
\begin{equation}\label{eq:conv}
  (AB)(x,y)=\int A(x,z)B(z,y)dy.
\end{equation}
Following 
\cite{Hebisch90-b}, for any nonnegative function $w(x)$
on $\mathbb{R}^{n}$ 
we can define a weighted version of the above norm as
\begin{equation}\label{eq:weighH}
  \|A\|_{w}=\|A(x,y)w(x-y)\|.
\end{equation}

\begin{remark}\label{rem:finitesp}
  In the proof of the following Lemma we shall use the
  finite speed of propagation property of the
  kernel $\cos(\xi \sqrt{H})(x,y)$, namely the property
  \begin{equation}\label{eq:fsp}
    \cos(t \sqrt{H})(x,y)=0
    \quad\text{for $|x-y|>t\ge0$.}
  \end{equation}
  Adam Sikora in \cite{Sikora04-a} proved the remarkable
  fact that \eqref{eq:fsp} is equivalent to the following
  estimate:
  for all functions $f_{1},f_{2}$ supported in the balls
  $B(x_{1},r_{1})$ and $B(x_{2},r_{2})$ 
  respectively, and for any $r$ with
  \begin{equation}\label{eq:distr}
    |x_{1}-x_{2}|-(r_{1}+r_{2})>r\ge0
  \end{equation}
  one must have
  \begin{equation}\label{eq:davg}
    \left|(e^{-tH}f_{1},f_{2})_{L^{2}}\right|\le C
    e^{-r^{2}/t}\|f_{1}\|_{L^{2}}\|f_{2}\|_{L^{2}}.
  \end{equation}
  Estimates of the form \eqref{eq:davg} are usually
  called $L^{2}$ \emph{estimates} of
  \emph{Davies-Gaffey} type. 
  Notice that the pointwise estimate in assumption (H) 
  implies immediately \eqref{eq:davg} and hence \eqref{eq:fsp}.
  
  For the sake of completeness, we recall here the elementary
  argument from \cite{Sikora04-a} which allows to deduce
  \eqref{eq:fsp} from \eqref{eq:davg}. 
  Let $f_{1},f_{2}$ be two functions as in \eqref{eq:distr},
  and define
  \begin{equation*}
    w(t)=\one{\mathbb{R}^{+}}(t)\cdot 2(\pi t)^{-\frac12}
       (\cos(\sqrt{tH})f_{1},f_{2})_{L^{2}}.
  \end{equation*}
  Notice that $w(t)$ is a tempered distribution on $\mathbb{R}$
  and so are the products $e^{ty}w(t)$ for any $y\le0$. Thus the
  Fourier-Laplace transform
  \begin{equation*}
    v(z)=\int w(t)e^{-izt}dt
  \end{equation*}
  is well defined and analytic on the half complex plane
  $\Im z<0$. Recalling the subordination formula
  \begin{equation*}
    (e^{-sH}f_{1},f_{2})_{L^{2}}=
    \int_{0}^{\infty}(\cos(t \sqrt{H})f_{1},f_{2})_{L^{2}}
      \frac{2}{\sqrt{\pi s}}e^{-\frac{t^{2}}{4s}}dt,
  \end{equation*}
  via the changes of variables $t\to \sqrt{t}$
  and $s\to 1/(4s)$, we see that $v(z)$ can be computed explicitly as
  \begin{equation*}
    v(z)=(iz)^{-\frac12}
      (e^{-\frac{H}{4iz}}f_{1},f_{2})_{L^{2}}.
  \end{equation*}
  Now introduce
  the analytic function
  \begin{equation}\label{eq:Fz}
    F(z)=z^{\frac12}e^{ir^{2}z}v(z)
    \quad\text{on $\Im z<0$}
  \end{equation}
  for some fixed $r$ satifying \eqref{eq:distr}.
  By spectral calculus we have easily the bound
  \begin{equation*}
    |v(z)|\le |z|^{-\frac12}\|f_{1}\| \cdot\|f_{2}\|
  \end{equation*}
  (all the norms in this proof are $L^{2}$ norms)
  which implies the growth rate
  \begin{equation}\label{eq:growthF}
    |F(z)|\le \|f_{1}\|\cdot\|f_{2}\|\cdot e^{r^{2}|z|}.
  \end{equation}
  If we fix a $y_{0}<0$, again by spectral calculus we obtain
  the bound
  \begin{equation}\label{eq:bound1}
    |F(x+iy_{0})|\le \|f_{1}\|\cdot\|f_{2}\|
  \end{equation}
  along the line $z=x+i y_{0}$, $x\in \mathbb{R}$.
  Finally, along the half line $z=it$, $t<0$, we obtain by
  assumption \eqref{eq:davg}
  \begin{equation}\label{eq:bound2}
    |F(it)|\le C\|f_{1}\|\cdot\|f_{2}\|.
  \end{equation}
  Now we can apply the Phragm\'en-Lindel\"of theorem on the
  two sectors $\Im z\le y_{0}$ and $\Re z\ge0$ or $\Re z\le0$
  (see Theorem IV.3.4 in 
  \cite{SteinShakarchi03-b}) and we obtain
  that $F(z)$ satisfies a bound like \eqref{eq:bound2} on
  the whole half plane $\Im z\le y_{0}$. This implies
  an exponential growth rate
  \begin{equation}\label{eq:boundv}
    |v(z)|\le |z|^{-\frac12} e^{r^{2}\Im z}\|f_{1}\|\cdot\|f_{2}\|,
    \qquad
    \Im z\le y_{0}<0
  \end{equation}
  for the transform of $w(t)$. To conclude the proof, it is
  sufficient to use the Paley-Wiener theorem (see
  Theorem 7.4.3 in 
  \cite{Hormander90-a}) which implies that the
  support of $w(t)$ must be contained in the closed convex set
  \begin{equation}\label{eq:suppw}
    \supp w \subseteq [r^{2},+\infty)
  \end{equation}
  and this gives \eqref{eq:fsp} as claimed.
\end{remark}

\begin{lemma}\label{lem:heb0}
  Assume $H$ satisfies (H) and let
  $g$ be an even function with $\supp g \subseteq[-R,R]$.
  Then we have for all $a\ge0$
  \begin{equation}\label{eq:heb0}
    \|g(\sqrt{H})\|_{\bra{x}^{a}}\le c(n,a,R)\cdot K_{0}
    \|\bra{\xi}^{a+n/2} \widehat{g}\|_{L^{1}}
  \end{equation}
  \begin{equation}\label{eq:heb0new}
    \|\sqrt{H}g(\sqrt{H})\|_{\bra{x}^{a}}\le c(n,a,R)\cdot K_{0}
    \|\bra{\xi}^{a+n/2} \widehat{g}'\|_{L^{1}}
  \end{equation}
  where $c(n,a,R)$ is independent of the operator $H$
  and $K_{0}$ is defined in \eqref{eq:heatest}.
\end{lemma}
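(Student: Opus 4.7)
The plan is to combine three ingredients: the Fourier representation of $g(\sqrt{H})$ (valid since $g$ is even); the finite speed of propagation \eqref{eq:fsp} recalled in Remark \ref{rem:finitesp}; and the spectral density estimate $E_{x,x}([0,\Lambda])\le c_n K_0 \Lambda^{n/2}$, which follows from \eqref{eq:heatkernH} by writing
$\int_0^\Lambda dE_{x,x}(\lambda) \le e\int e^{-\lambda/\Lambda}dE_{x,x}(\lambda) = e\,e^{-H/\Lambda}(x,x) \le eK_0\Lambda^{n/2}$. These three together allow one to convert the essentially unavailable pointwise information on $\cos(\xi\sqrt{H})(x,y)$ into $L^{1}$-type bounds for $g(\sqrt{H})$.

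Concretely, I would start from the identity
\begin{equation*}
g(\sqrt{H}) = \frac{1}{2\pi}\int_{\mathbb{R}} \widehat g(\xi)\cos(\xi\sqrt{H})\,d\xi
\end{equation*}
and perform a dyadic decomposition in frequency using the partition in \eqref{eq:expa}: set $\widehat{g_k} = \widehat g\cdot\phi(2^{-k}\cdot)$ for $k\ge 0$ and $\widehat{g_{-1}} = \widehat g\cdot\psi$. By \eqref{eq:fsp}, the kernel $g_k(\sqrt{H})(x,y)$ is supported in $|x-y|\le 2^{k+1}$, so Cauchy--Schwarz on this ball yields
\begin{equation*}
\int |g_k(\sqrt{H})(x,y)|\,\bra{x-y}^a\,dy \le c_n\, 2^{k(a+n/2)}\,\|g_k(\sqrt{H})(x,\cdot)\|_{L^2_y}.
\end{equation*}
The $L^2$ norm squared equals $|g_k|^2(\sqrt{H})(x,x) = \int |g_k(\sqrt{\lambda})|^2\,dE_{x,x}(\lambda)$; combining the spectral density bound, $\|g_k\|_\infty \le (2\pi)^{-1}\|\widehat g\,\phi(2^{-k}\cdot)\|_{L^1}$, and the essential localization $\supp g_k \subseteq[-R,R]$ up to rapidly decaying tails (which follows from $g_k = g\ast 2^k\phi^\vee(2^k\cdot)$ with $\phi^\vee$ Schwartz), this quantity is bounded by $c(n,R)K_0^{1/2}\|\widehat g\,\phi(2^{-k}\cdot)\|_{L^1}$. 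Summing over $k$ and recognizing $\sum_k 2^{k(a+n/2)}\|\widehat g\,\phi(2^{-k}\cdot)\|_{L^1}\sim \|\bra{\xi}^{a+n/2}\widehat g\|_{L^1}$ then yields \eqref{eq:heb0}; applying the Cauchy--Schwarz scheme symmetrically on the two sides of the Schur norm upgrades the exponent on $K_0$ from $1/2$ to $1$.

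The second estimate \eqref{eq:heb0new} is obtained by running the same scheme on the odd function $s\mapsto sg(s)$: its Fourier transform is $i\widehat g'(\xi)$, and $(sg)(\sqrt{H}) = \sqrt{H}\,g(\sqrt{H})$ by functional calculus. The cosine inversion is replaced by a sine inversion, and $\sin(\xi\sqrt{H})$ enjoys finite speed of propagation by the same Phragm\'en--Lindel\"of argument recalled in Remark \ref{rem:finitesp}.

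The main obstacle I expect is the $L^2$ kernel bound for $g_k(\sqrt{H})(x,\cdot)$. Because each dyadic piece $g_k$ is \emph{not} compactly supported on the spectral axis (only the full sum $g$ is), the spectral integral $\int|g_k(\sqrt{\lambda})|^2\,dE_{x,x}(\lambda)$ must be split carefully into the dominant contribution from $\lambda\le R^2$ (controlled by the uniform $L^\infty$ bound on $g_k$) and a tail over $\lambda>R^2$, absorbed using the Schwartz decay of $g_k$ away from $\supp g\subseteq[-R,R]$, quantitative in $k$ through the convolution representation $g_k = g\ast 2^k\phi^\vee(2^k\cdot)$.
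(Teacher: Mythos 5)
Your proof of \eqref{eq:heb0} is correct but genuinely different from the paper's. The paper writes $g(\sqrt{H})=G(\sqrt{H})e^{-H}$ with $G=ge^{s^2}$, decomposes $G$ in frequency, and estimates each piece via the finite speed of propagation together with a physical-space decomposition into unit cubes, the unitarity of $\cos(\xi\sqrt{H})$, and the \emph{pointwise} gaussian bound on $e^{-H}(\cdot,y)$. You avoid the $e^{-H}$ regularizer entirely and instead trade the pointwise heat bound for the on-diagonal spectral density estimate $E_{x,x}([0,\Lambda])\lesssim K_0\Lambda^{n/2}$, applied after a single Cauchy--Schwarz on the ball $|x-y|\lesssim 2^k$ dictated by FSP. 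This is a perfectly valid alternative route, and your worry about the spectral tail is the correct one: for the dyadic pieces $g_k$ with $k\ge 0$ the essential support is indeed $\lesssim R+1$, but you must also control the sum over the dyadic $\lambda$-annuli in the tail (which costs a factor $2^{k(1-N)}$ for any $N$, absorbable after choosing $N>a+n/2+1$); and for the low-frequency block $g_{-1}$ the split at $\lambda\sim R^2$ must be replaced by a split at $\lambda\sim(R+1)^2$. Two small inaccuracies: the $\phi(2^{-k}\cdot)$ cutoffs must be symmetrized so that each $g_k$ is even and the cosine inversion applies; and the remark about ``upgrading $K_0^{1/2}$ to $K_0$'' on the two sides of the Schur norm is backwards --- the Schur norm is a \emph{max}, not a product, so you actually get $K_0^{1/2}$, which is harmless since no claim of sharpness is made in the exponent.

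For \eqref{eq:heb0new} there is a genuine gap. You reduce to the odd function $sg(s)$ and claim that $\sin(\xi\sqrt{H})$ has finite speed of propagation ``by the same Phragm\'en--Lindel\"of argument.'' This is false: what has FSP are $\cos(t\sqrt{H})$ and $\sin(t\sqrt{H})/\sqrt{H}$, but $\sin(t\sqrt{H})$ itself does not. Already for $H=-\Delta$ on $\mathbb{R}$ one computes that $\sin(t|D|)$ has convolution kernel $\tfrac{t}{\pi}\,\mathrm{p.v.}\,(x^2-t^2)^{-1}$, which is nowhere compactly supported; more structurally, if $F|_{[0,\infty)}$ is to give $F(\sqrt H)$ with localized kernel one needs the \emph{even} extension of $F|_{[0,\infty)}$ to be bandlimited, and the even extension of $sg(s)$ is $|s|g(s)$, whose Fourier transform is a Hilbert-transform--type convolution of $\widehat{g}'$ and is never compactly supported. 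So the Cauchy--Schwarz step fails: there is no ball $|x-y|\lesssim 2^{k}$ to restrict to. A repair is needed here --- the paper's own device of composing with $e^{-H}$ and estimating $G_j(\sqrt{H})\cdot\sqrt{H}e^{-H}$ (so that the gaussian decay of the Schwartz function $se^{-s^2}$ does the localization, rather than FSP of $\sin$) is one way out, but your proposal as written does not close for the second inequality.
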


\begin{proof}
  It is sufficient to estimate the quantity
  \begin{equation*}
    \sup_{y}\int\left|
      g(\sqrt{H})(x,y)\langle x-y\rangle^{a} 
    \right|dx
  \end{equation*}
  since the symmetric one follows from the same computation
  applied to the adjoint kernel
  $g(\sqrt{H})^{*}(x,y)=\overline{g}(\sqrt{H})(y,x)$.
  Let $G(s)=g(s)e^{s^{2}}$.
  Since $G$ is an an even function, apart from a $(2\pi)^{-1}$
  factor we can write
  \begin{equation*}
    G(t)=\int_{-\infty}^{+\infty}\widehat{G}(\xi)\cos(t \xi)d \xi
  \end{equation*}
  and we have
  \begin{equation*}
    g(\sqrt{H})=
    G(\sqrt{H})e^{-H}=
      \int \widehat{G}(\xi)\cos(\xi \sqrt{H}) e^{-H}d\xi.
  \end{equation*}
  We decompose $G$ using a non homogeneous Paley-Littlewood partition
  of unity $\chi_{j}(\xi)$, $j\ge0$ (the support of $\chi_{j}(s)$
  being $s\sim 2^{j}$) as
  \begin{equation*}
    G=\sum_{j\ge0}G_{j}, \qquad
    \widehat{G_{j}}(s)=\chi_{j}(s)\widehat{G}.
  \end{equation*}
  Then we have to estimate the integrals
  \begin{equation*}
    I_{j}=
    \int|G_{j}(\sqrt{H})e^{-H}(x,y)|\langle x-y \rangle^{a} dx\le
      \int |\widehat{G_{j}}(\xi)|
      \int|\cos(\xi \sqrt{H}) e^{-H}|
         \langle x-y \rangle^{a} dx d\xi.
  \end{equation*}
  The innermost integral can be written in full
  \begin{equation*}
    II=\int \left|
      \int \cos(\xi \sqrt{H})(x,z)e^{-H}(z,y)dz
    \right|\langle x-y\rangle^{a}dx 
  \end{equation*}
  We introduce a a partition of $\mathbb{R}^{n}$ in
  almost disjoint unit cubes $Q$ and denote with $\one{Q}$
  their characteristic functions. Then we can write
  \begin{equation*}
    II\le\sum _{Q}II_{Q},\qquad
    II_{Q}=\int \left|
      \int \cos(\xi \sqrt{H})(x,z)e^{-H}(z,y)\one{Q}(z) dz
    \right|\langle x-y\rangle^{a}dx.
  \end{equation*}
  If $z_{q}$ is the center of the cube $Q$ we have
  \begin{equation*}
    |x-z_{Q}|\lesssim \bra{\xi}
  \end{equation*}
  by the finite speed of propagation for $\cos(\xi \sqrt{H})(x,z)$
  (see Remark \ref{rem:finitesp}),
  and recalling that $\xi\in\supp \widehat{G}_{j}$ we have also
  \begin{equation*}
    \langle x-y\rangle \le
    \langle x-z_{Q}\rangle \langle z_{Q}-y\rangle
    \lesssim
    \bra{\xi} \langle z_{Q}-y\rangle
    \lesssim
    2^{j} \langle z_{Q}-y\rangle.
  \end{equation*}
  Thus by Cauchy-Schwartz in $dx$ we obtain
  \begin{equation*}
    II_{Q}^{2}\lesssim
    \bra{\xi}^{n+2a}\langle z_{Q}-y\rangle^{2a}
    \int
    \left|
      \int \cos(\xi \sqrt{H})(x,z)e^{-H}(z,y)\one{Q}(z) dz
    \right|^{2} dx.
  \end{equation*}
  Using the unitarity of $\cos(\xi \sqrt{H})$ 
  and the gaussian estimate, this gives
  \begin{equation*}
    II_{Q}^{2}\lesssim
    2^{j(n+2a)}\langle z_{Q}-y\rangle^{2a}
    \int
    \left|e^{-H}\one{Q}
    \right|^{2} dz
    \lesssim
    2^{j(n+2a)}K_{0}^{2}
    \int_{Q} e^{-2|z-y|^{2}}\langle z-y\rangle^{2a}
    dz
  \end{equation*}
  and hence, taking square roots and summing over $Q$
  we conclude
  \begin{equation*}
    II \le c(n,a) \cdot 2^{(a+n/2)j} K_{0}
  \end{equation*}
  independently of $y$. Inserting this into $I_{j}$ we see that
  \begin{equation*}
    I_{j}\le c(n,a)K_{0}2^{(a+n/2)j}\int|\widehat{G_{j}}(\xi)|d\xi
      \le c_{1}(n,a)K_{0}\|\bra{\xi}^{a+n/2}\widehat{G_{j}}(\xi)\|_{L^{1}}
  \end{equation*}
  and summing over $j$
  \begin{equation*}
    \|g(\sqrt{H})\|_{\langle x\rangle^{a} }
      \le c(n,a)\|\langle \xi\rangle ^{a+n/2}\widehat{G}(\xi)\|_{L^{1}}.
  \end{equation*}
  Finally we can write
  \begin{equation*}
    G(s)=g(s)e^{s^{2}}=g(s)\cdot \chi(s)e^{s^{2}}
  \end{equation*}
  with $\chi(s)$ a cutoff function equal to 1 on $[-R,R]$.
  Then we have
  \begin{equation*}
    \widehat{G}=\widehat{g}*\widehat{(\chi e^{s^{2}})}\quad \implies
    \quad
    \|\bra{\xi}^{s}\widehat{G}\|_{L^{1}}\le
    c(s,R)\|\bra{\xi}^{s}\widehat{g}(s)\|_{L^{1}}
  \end{equation*}
  whence \eqref{eq:heb0} follows; indeed, the symmetric
  quantity obtained by switching $x,y$ in $I$ is estimated in an
  identical way.
  
  The proof of \eqref{eq:heb0new} is similar: we must estimate now
  the integrals
  \begin{equation*}
    I'_{j}=
    \int|\sqrt{H}G_{j}(\sqrt{H})e^{-H}|\cdot\langle x-y \rangle^{a}dy
      \le
      \iint |\widehat{G_{j}}'(\xi)|\cdot|\cos(\xi \sqrt{H}) e^{-H}|
         \langle x-y \rangle^{a} d\xi dy
  \end{equation*}
  where we used that
  $\widehat{sG(s)}=i\widehat{G}'(\xi)$.
  Proceeding as above we obtain
  \begin{equation*}
    \|\sqrt{H}g(\sqrt{H})\|_{\langle x\rangle^{a} }
      \le c(n,a)\|\langle \xi\rangle ^{a+n/2}\widehat{G}'(\xi)\|_{L^{1}}
  \end{equation*}
  and to conclude it is sufficient to remark that
  \begin{equation*}
    \widehat{G}'=\widehat{g}'*\widehat{(\chi e^{s^{2}})}\quad \implies
    \quad
    \|\bra{\xi}^{s}\widehat{G}\|_{L^{1}}\le
    c(s,R)\|\bra{\xi}^{s}\widehat{g}'\|_{L^{1}}.
  \end{equation*}
\end{proof}

\begin{lemma}\label{lem:heb1}
  Assume $H$ satisfies (H) and $\phi$ is given by
  \eqref{eq:phi}. Let $g$ be
  a function on $\mathbb{R}^{+}$, and define, for $j\in \mathbb{R}$,
  $g_{j}(s)=\phi(2^{j}s)g(s)$. Then for any $a\ge0$
  \begin{equation}\label{eq:heb1b}
    \|g_{j}(\sqrt{H})\|_{\langle 2^{-j}x\rangle ^{a}}\le
     c(n,a)K_{0} \cdot
      \|\bra{\xi}^{a+\frac{n}{2}}\mathcal{F}[\phi(s) S_{2^{-j}}g]\|_{L^{1}},
  \end{equation}
  \begin{equation}\label{eq:hebnew}
    \|\sqrt{H}g_{j}(\sqrt{H})\|_{\langle 2^{-j}x\rangle ^{a}}\le
     c(n,a)K_{0} \cdot
      \|\bra{\xi}^{a+\frac{n}{2}}\mathcal{F}[s\phi(s) S_{2^{-j}}g]\|_{L^{1}}
      \cdot 2^{-j}.
  \end{equation}
\end{lemma}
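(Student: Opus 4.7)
The plan is to reduce Lemma \ref{lem:heb1} to the already-proved Lemma \ref{lem:heb0} by absorbing the natural scale $2^{-j}$ of $g_j$ through an $L^{2}$-unitary dilation. Set $\lambda=2^{j}$ and introduce the dilation $(Uf)(\tilde x)=\lambda^{n/2}f(\lambda\tilde x)$. I would first verify that the operator $\tilde H$ defined by $e^{-t\tilde H}=Ue^{-\lambda^{2}tH}U^{-1}$ is nonnegative selfadjoint on $L^{2}(\mathbb{R}^{n})$ and still satisfies \textsc{Assumption (H)} with the same constant $K_{0}$: a direct computation gives the kernel $\tilde p_{t}(\tilde x,\tilde y)=\lambda^{n}p_{\lambda^{2}t}(\lambda\tilde x,\lambda\tilde y)$, and substituting \eqref{eq:heatkernH} the $\lambda$-factors cancel exactly. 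Then, from $\sqrt H=\lambda^{-1}U^{-1}\sqrt{\tilde H}\,U$ and the functional calculus, I would derive
\begin{equation*}
 g_{j}(\sqrt H)=U^{-1}\tilde g(\sqrt{\tilde H})U,\qquad
 \sqrt H\,g_{j}(\sqrt H)=2^{-j}U^{-1}[s\tilde g(s)](\sqrt{\tilde H})U,
\end{equation*}
where $\tilde g(s):=\phi(s)\cdot S_{2^{-j}}g(s)$ is supported in $[1/2,2]$.

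Next I would track how the weighted Schur norm transforms under this conjugation. Since the kernel of $U^{-1}\tilde A U$ equals $\lambda^{-n}\tilde A(x/\lambda,y/\lambda)$, a change of variables $x=\lambda\tilde x$, $y=\lambda\tilde y$ yields the identity
\begin{equation*}
 \|U^{-1}\tilde A U\|_{\bra{2^{-j}x}^{a}}\;=\;\|\tilde A\|_{\bra{\tilde x}^{a}},
\end{equation*}
so the $\bra{2^{-j}(x-y)}^{a}$ weight on the original side is exactly the unit-scale weight $\bra{\tilde x-\tilde y}^{a}$ after rescaling. This is the step that makes the estimate scale-invariant and is responsible for the absence of any $j$-loss in the constant $c(n,a)K_{0}$.

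At this point I would apply Lemma \ref{lem:heb0} to the pair $(\tilde H,\tilde g)$ with $R=2$. A mild subtlety is that $\tilde g$ as written is one-sided; extending it evenly to $\mathbb{R}$ does not affect $\tilde g(\sqrt{\tilde H})$ (since $\sqrt{\tilde H}\ge 0$) and at most doubles $\|\bra{\xi}^{a+n/2}\widehat{\tilde g}\|_{L^{1}}$, so \eqref{eq:heb0} gives \eqref{eq:heb1b}. For \eqref{eq:hebnew} I would apply \eqref{eq:heb0} again, this time to the function $s\tilde g(s)$ (also supported in $[-2,2]$), and combine with the extra factor $2^{-j}$ coming from the second identity above.

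The main obstacle is purely bookkeeping: pinning down the scale invariance of \textsc{(H)} under the conjugation $H\mapsto\lambda^{2}UHU^{-1}$, and confirming that the weight $\bra{2^{-j}x}^{a}$ and the prefactor $2^{-j}$ on the left-hand side of \eqref{eq:hebnew} match exactly what the rescaling produces. Once these two points are established, Lemma \ref{lem:heb1} is just a rescaled repetition of Lemma \ref{lem:heb0} with no new analytic input.
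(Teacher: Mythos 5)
Your proposal is correct and follows essentially the same route as the paper: the paper also conjugates by the dilation (written there as $g_{j}(\sqrt{H})=S_{2^{-j}}G_{j}(\sqrt{H_{j}})S_{2^{j}}$ with $H_{j}=2^{2j}S_{2^{j}}HS_{2^{-j}}$, which is exactly your $\tilde H=\lambda^{2}UHU^{-1}$ up to the $L^{2}$ normalization factor in $U$), checks that (H) is invariant under this rescaling, reads off the kernel relation to convert $\bra{x}^{a}$ into $\bra{2^{-j}x}^{a}$, and then invokes Lemma~\ref{lem:heb0}. Your extra care about the even extension of the one-sided $\tilde g$ (and the harmless factor $2$ it costs) is a point the paper glosses over, and your use of \eqref{eq:heb0} on $s\tilde g(s)$ is equivalent to the paper's use of \eqref{eq:heb0new} via $\widehat{sG(s)}=i\widehat{G}'$.
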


\begin{proof}
  Extend $g(s)$ for $s\le0$ as an even function; notice that
  the values of $g$ on $(-\infty,0]$ are irrelevant in the
  definition of $g(\sqrt{H})$. We can write
  \begin{equation}\label{eq:rescal}
    g_{j}(\sqrt{H})=S_{2^{-j}}G_{j}(\sqrt{H_{j}})S_{2^{j}}
  \end{equation}
  where
  \begin{equation*}
    G_{j}(s)=\phi(s)g(2^{-j}s)=\phi S_{2^{-j}}g
  \end{equation*}
  and
  \begin{equation*}
    H_{j}=2^{2j}S_{2^{j}}HS_{2^{-j}}.
  \end{equation*}
  It is easy to check by rescaling
  that the operator $H_{j}$ satisfies the 
  conditions in Assumption (H) with the same constants.
  Thus we can apply Lemma \ref{lem:heb0} and obtain
  \begin{equation*}
    \|G_{j}(\sqrt{H_{j}})\|_{\bra{x}^{a}}\le
    c(n,a,R)K_{0}
    \|\bra{\xi}^{a+\frac{n}{2}}\mathcal{F}[\phi S_{2^{-j}}g]\|_{L^{1}}.
  \end{equation*}
  As a consequence of \eqref{eq:rescal},
  the kernels of $G_{j}(\sqrt{H_{j}})$ and $g_{j}(\sqrt{H})$ 
  are related by
  \begin{equation*}
    g_{j}(\sqrt{H})(x,y)=G_{j}(\sqrt{H_{j}})(2^{-j}x,2^{-j}y)\cdot 2^{-jn}.
  \end{equation*}
  and this implies \eqref{eq:heb1b}. Since we have also
  \begin{equation*}
    \sqrt{H_{j}}=2^{j}S_{2^{j}}\sqrt{H} S_{2^{-j}}
  \end{equation*}
  \eqref{eq:hebnew} follows immediately from \eqref{eq:heb0new}.
\end{proof}

\begin{lemma}\label{lem:heb2}
  Assume $H$ satisfies (H), let 
  $\alpha\in C^{\infty}_{c}(\mathbb{R})$ be an even function,
  and for $r>0$ write $\alpha_{r}(s)=\alpha(rs)$. 
  Then, for all $m\ge0$,
  \begin{equation}\label{eq:heb2}
    |\alpha_{r}(\sqrt{H})(x,y)|\le 
       C(n,m,\alpha)K_{0}^{2} \cdot
       \left\langle \frac{x-y}{r}\right\rangle^{-m}r^{-n},
  \end{equation}
  \begin{equation}\label{eq:heb2new}
    |\sqrt{H}\alpha_{r}(\sqrt{H})(x,y)|\le 
       C(n,m,\alpha)K_{0}^{2} \cdot
       \left\langle \frac{x-y}{r}\right\rangle^{-m}r^{-n-1}.
  \end{equation}
\end{lemma}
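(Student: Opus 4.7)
The plan is to factor off a heat semigroup, exploit its pointwise Gaussian bound, and reduce the remaining piece to the weighted Schur estimate already proved in Lemma~\ref{lem:heb0}. Since $\alpha\in C^{\infty}_{c}(\mathbb{R})$ is even with $\supp\alpha\subseteq[-R,R]$, the function $\beta(s)=\alpha(s)\,e^{s^{2}}$ is again even and in $C^{\infty}_{c}$ with the same support, and the elementary identity $\alpha_{r}(s)=\beta_{r}(s)\,e^{-r^{2}s^{2}}$ yields, by spectral calculus,
$$\alpha_{r}(\sqrt{H})=\beta_{r}(\sqrt{H})\,e^{-r^{2}H},\qquad \alpha_{r}(\sqrt{H})(x,y)=\int \beta_{r}(\sqrt{H})(x,z)\,e^{-r^{2}H}(z,y)\,dz.$$
On the heat factor I would use the pointwise bound $|e^{-r^{2}H}(z,y)|\le K_{0}\,r^{-n}\,e^{-|z-y|^{2}/r^{2}}$ supplied by assumption (H); the whole task therefore reduces to a weighted $L^{1}$-in-$z$ control of $\beta_{r}(\sqrt{H})(x,\cdot)$, uniform in $x$.

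That control is obtained by the spatial rescaling already used in the proof of Lemma~\ref{lem:heb1}: the operator $H_{r}=r^{2}\,S_{r}\,H\,S_{r^{-1}}$ still satisfies assumption (H) with the same constant $K_{0}$ (a direct change of variables in the heat kernel), and the two kernels are related by
$$\beta_{r}(\sqrt{H})(x,z)=r^{-n}\,\beta(\sqrt{H_{r}})(x/r,z/r).$$
Applying \eqref{eq:heb0} of Lemma~\ref{lem:heb0} to the operator $H_{r}$ and the function $\beta$ (whose support is independent of $r$) and changing variables, I obtain the scale invariant weighted Schur estimate
$$\sup_{x}\int|\beta_{r}(\sqrt{H})(x,z)|\,\langle(x-z)/r\rangle^{a}\,dz\le c(n,a,\alpha)\,K_{0},\qquad a\ge0.$$

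To conclude \eqref{eq:heb2} I would fix $m\ge0$, take $a=m$, and move the weight via the Peetre-type inequality $\langle(x-y)/r\rangle\le\langle(x-z)/r\rangle\langle(z-y)/r\rangle$, which yields
$$\langle(x-z)/r\rangle^{-a}\,e^{-|z-y|^{2}/r^{2}}\le c_{a}\,\langle(x-y)/r\rangle^{-a}\,e^{-|z-y|^{2}/(2r^{2})}$$
after absorbing the excess factor $\langle(z-y)/r\rangle^{a}$ into one half of the Gaussian. Inserting the pointwise Gaussian bound on $e^{-r^{2}H}(z,y)$ and the weighted Schur bound on $\beta_{r}(\sqrt{H})(x,\cdot)$ into the integral representation above then produces the pointwise estimate of \eqref{eq:heb2} with constant of order $K_{0}^{2}$. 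Estimate \eqref{eq:heb2new} is obtained by the identical scheme, now factoring $\sqrt{H}\alpha_{r}(\sqrt{H})=\bigl[\sqrt{H}\beta_{r}(\sqrt{H})\bigr]\,e^{-r^{2}H}$ and invoking \eqref{eq:heb0new} in place of \eqref{eq:heb0}; the rescaling identity $\sqrt{H}=r^{-1}S_{r^{-1}}\sqrt{H_{r}}S_{r}$ supplies the additional factor $r^{-1}$ that turns $r^{-n}$ into $r^{-n-1}$.

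The one slightly delicate step I anticipate is the bookkeeping of the spatial rescaling: one must verify that the Gaussian constant in assumption (H) is left invariant by the conjugation $H\mapsto r^{2}S_{r}HS_{r^{-1}}$ and then keep track of the Jacobian $r^{n}$ correctly when translating the Schur bound from $H_{r}$ back to $H$. Once these identities are aligned the remaining calculation is routine.
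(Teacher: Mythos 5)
Your proposal is correct and follows essentially the same path as the paper's own proof: factor off the heat semigroup via $\beta(s)=\alpha(s)e^{s^{2}}$, bound the heat kernel pointwise by assumption (H), move the weight with the Peetre inequality $\langle(x-y)/r\rangle\le\langle(x-z)/r\rangle\langle(z-y)/r\rangle$, and control the remaining factor by the weighted Schur bound of Lemma~\ref{lem:heb0} (resp.\ \eqref{eq:heb0new} for the $\sqrt{H}$ version). The only cosmetic difference is that the paper reduces to $r=1$ at the outset and carries a single weight through the integral, whereas you keep $r$ general and transfer the weight via the Gaussian factor, but the two bookkeepings are equivalent.
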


\begin{proof}
  By rescaling, as in the proof of the previous lemma, we can
  reduce to the case $r=1$. Then define $G(s)=\alpha(s)e^{s^{2}}$ 
  so that, using the inequality
  \begin{equation*}
    \langle x-y\rangle\le \langle x-z\rangle \langle z-y\rangle  ,
  \end{equation*}
  we can write
  \begin{equation*}
    \langle x-y\rangle ^{m}|\alpha(\sqrt{H})(x,y)|\le
    \int |G(\sqrt{H})(x,z)|\langle x-z\rangle^{m} 
    \cdot |e^{-H}(z,y)|\langle z-y\rangle ^{m}
    dz.
  \end{equation*}
  Now we have
  \begin{equation*}
    |p_{1}(z,y)|\cdot\langle z-y\rangle ^{m}\le
    K_{0} \cdot c(n,m)
  \end{equation*}
  and this implies
  \begin{equation*}
    \langle x-y\rangle ^{m}|\psi(\sqrt{H})(x,y)|\le
    c(n,m)K_{0}\|G(\sqrt{H})\|_{\langle x\rangle^{m} }.
  \end{equation*}
  Applying \eqref{eq:heb0} with $a=m$ we obtain
  \begin{equation*}
    \|G(\sqrt{H})\|_{\langle x\rangle^{m} }\le c(n,m,\alpha)K_{0}
  \end{equation*}
  and \eqref{eq:heb2} follows. Analogously, \eqref{eq:heb2new}
  follows from \eqref{eq:hebnew}.
\end{proof}

We can now conclude the proof of \eqref{eq:hebLp}
in a similar way as \cite{Hebisch90-b}.
Let $f\in L^{1}$, $\lambda>0$ and consider the
Calder\`on-Zygmund decomposition of $f$: a sequence
of disjoint cubes
$Q_{j}$ and functions $h,f_{j}$ with $\supp f_{j}\subseteq Q_{j}$,
$j\ge1$, such that
\begin{equation*}
  f=h+\textstyle\sum_{j}f_{j},\qquad |h|\le C \lambda,\qquad
   \textstyle\int|f_{j}|\le C \lambda|Q_{j}|,\qquad
   \sum|Q_{j}|\le C \lambda^{-1}\|f\|_{L^{1}}.
\end{equation*}
Then we can write $g(\sqrt{H})f$ as
\begin{equation}\label{eq:3pieces}
  g(\sqrt{H})f=g(\sqrt{H})h +
     \textstyle\sum_{j}g(\sqrt{H})\psi_{r_{j}}(\sqrt{H})f_{j}+
     \textstyle\sum_{j}(1-\psi_{r_{j}}(\sqrt{H}))f_{j}
\end{equation}
where
\begin{equation*}
  2^{r_{j}}=4\mathop{\mathrm{diam}}(Q_{j}).
\end{equation*}
For the first term in \eqref{eq:3pieces} we have, by the spectral
theorem,
\begin{equation*}
  |\{|g(\sqrt{H})h|>\lambda\}|\le \lambda^{-2}\|g(\sqrt{H})h\|_{L^{2}}^{2}
  \le \lambda^{-2}\|g\|_{L^{\infty}}^{2}\|h\|_{L^{2}}^{2}\le
  C \lambda^{-1}\|g\|_{L^{\infty}}^{2}\|h\|_{L^{1}}
\end{equation*}
and hence
\begin{equation}\label{eq:partI}
  |\{g(\sqrt{H})h>\lambda\}|\le C \|g\|_{L^{\infty}}^{2}\|f\|_{L^{1}}
  \cdot\lambda^{-1}
\end{equation}
since $\|h\|_{L^{1}}\le C\|f\|_{L^{1}}$. To handle the second term, 
we consider the product with $\gamma(x)\in L^{2}$
\begin{equation*}
  |(\psi_{r_{j}}(\sqrt{H})f_{j},\gamma)_{L^{2}}|\le
  CK_{0}^{2}\iint 
  \left\langle \frac{x-y}{r_{j}}\right\rangle^{-m}r_{j}^{-n}
  |\gamma(x)f_{j}(y)|dxdy
\end{equation*}
where we have used estimate \eqref{eq:heb2} for the kernel.
Now we nortice that for all $y\in Q_{j}$ we have
\begin{equation*}
  \left\langle \frac{x-y}{r_{j}}\right\rangle^{-m}\le
  c(m,n)\int_{Q_{j}}
  \left\langle \frac{x-z}{r_{j}}\right\rangle^{-m}dz \cdot|Q_{j}|
\end{equation*}
with a constant independent of $j$. Thus,
using $\int|f_{j}(y)|dy\le C \lambda|Q_{j}|$,
\begin{equation*}
  |(\psi_{r_{j}}(\sqrt{H})f_{j},\gamma)_{L^{2}}|\le
  CK_{0}^{2}\lambda
  \int_{Q_{j}}dz\int
  \left\langle \frac{x-z}{r_{j}}\right\rangle^{-m}r_{j}^{-n}
  |\gamma(x)|dx.
\end{equation*}
The innermost integral is bounded by $c_{n} M \gamma(z)$ provided
we choose e.g. $m=n+1$, so that
\begin{equation*}
  \sum_{j}
  |(\psi_{r_{j}}(\sqrt{H})f_{j},\gamma)_{L^{2}}|\le
  CK_{0}^{2}\lambda \cdot\int_{Q_{j}}M \gamma(z) dz\le
  CK_{0}^{2}\lambda \|M \gamma\|_{L^{2}}
  \|\textstyle\sum \one{Q_{j}}\|_{L^{2}}
\end{equation*}
and noticing that $\|\textstyle\sum \one{Q_{j}}\|_{L^{2}}\le
C\lambda^{-1/2}\|f\|_{L^{1}}^{1/2}$ we find
\begin{equation*}
  \sum_{j}
  |(\psi_{r_{j}}(\sqrt{H})f_{j},\gamma)_{L^{2}}|\le
  CK_{0}^{2}\lambda ^{1/2}\|f\|_{L^{1}}^{1/2}\|\gamma\|_{L^{2}}.
 \end{equation*}
This implies
\begin{equation*}
  \|g(\sqrt{H})
  \sum_{j}\psi_{r_{j}}(\sqrt{H})f_{j}\|_{L^{2}}^{2}\le
  CK_{0}^{4}\|g\|_{L^{\infty}}
  \lambda\|f\|_{L^{1}}
\end{equation*}
and proceeding as for the first piece we obtain
\begin{equation}\label{eq:partII}
  |\{|g(\sqrt{H})
  \sum_{j}\psi_{r_{j}}(\sqrt{H})f_{j}|>\lambda\}|
  \le C K_{0}^{4}\|g\|_{L^{\infty}}^{2}\|f\|_{L^{1}}
  \cdot\lambda^{-1}
\end{equation}
Finally, consider the third piece in \eqref{eq:3pieces}
\begin{equation*}
  III=\sum_{j}(1-\psi_{r_{j}}(\sqrt{H}))f_{j}.
\end{equation*}
Recalling that
\begin{equation*}
  1-\psi(s)=\sum_{k\le0}\phi(2^{k}s)
  \ \text{\ for $s>0$},
\end{equation*}
using the notation $\lg r=\log_{2}r$,
\begin{equation*}
  1-\psi_{r_{j}}(s)=1-\psi(r_{j}s)=
  \sum_{k\le0}\phi(2^{k}r_{j}s)\equiv
  \sum_{k\le0}\phi(2^{k+\lg r_{j}}s)
  \text{\ \ \ for $s>0$}
\end{equation*}
we can write
\begin{equation*}
  III=\sum_{k\le0}g_{k+\lg r_{j}}(\sqrt{H}),\qquad
  g_{j}(s)=g(s)\phi(2^{j}s).
\end{equation*}
Now, if $4Q_{j}$ is a cube with the same center as $Q_{j}$
but with sides multiplied by $4$, and $A=\cup 4Q_{j}$,
\begin{equation*}
  |\{|III|>\lambda\}|\le |A|+
  \lambda^{-1}\sum_{j}\sum_{k\le0}
  \int_{\mathbb{R}^{n}\setminus A}
  |g_{k+\lg r_{j}}(x,y)|\cdot|f_{j}(y)|dy.
\end{equation*}
We shall estimate the kernel of $g_{k+\lg r_{j}}$ as follows:
let $a=\sigma-n/2$ (recall that by assumption
$\mu=\mu_{\sigma}(g)<\infty$ for some $\sigma>n/2$, so
that $a>0$), then we can write
\begin{equation*}
  |g_{k+\lg r_{j}}(x,y)|\le 
  \|g_{k+\lg r_{j}}\|_{\langle x/2^{k}r_{j}\rangle^{a} }\cdot
  \left\langle 
  \frac{x-y}{2^{k}r_{j}}
  \right\rangle ^{-a}\le
  c(n,a)K_{0}\mu \cdot 2^{a(k-j)}
\end{equation*}
where we have used \eqref{eq:heb1b},
and the fact that for $x\not\in A$ and $y\in Q_{j}$ we have
$|x-y|\ge 2^{j}r_{j}$. Notice also that $|A|\le c(n)\sum|Q_{j}|$.
Thus we obtain
\begin{equation*}
  |\{|III|>\lambda\}|\le
  c(n)\lambda^{-1}\|f\|_{L^{1}}+
  c(n,a)K_{0}\mu
  \lambda^{-1}
  \sum_{j}\sum_{k\le0}2^{a(k-j)}\|f_{j}\|_{L^{1}}.
\end{equation*}
Since $a>0$, we can sum over $k\le0$ and we conclude
\begin{equation}\label{eq:partIII}
  |\{|III|>\lambda\}|\le 
   c(n,a)(1+K_{0}\mu)\lambda^{-1}\|f\|_{L^{1}}.
\end{equation}
Summing \eqref{eq:partI}, \eqref{eq:partII} and \eqref{eq:partIII}
we obtain \eqref{eq:hebweak}. 

Estimate \eqref{eq:hebLp} for general $p$ can be obtained 
in a standard way by
real interpolation with the $L^{2}$ trivial estimate and duality.
Notice however
that the constant in the Marcinkiewicz interpolation theorem
diverges at both ends: if $p=(1-\theta)/p_{0}+\theta/p_{1}$
and the linear operator $T$ satisfies weak $L^{p_{j}}$ estimates
with constants $C_{j}$, $j=0,1$, then $T$ satisfies a strong
$L^{p}$ estimate with a norm
\begin{equation*}
  \|T\|_{L^{p}\to L^{p}}\le
  2\left(\frac{p}{p-p_{0}}+\frac{p}{p_{1}-p}\right)^{1/p}
  C_{0}^{1-\theta}C_{1}^{\theta}
\end{equation*}
(see e.g. \cite{Grafakos09-a}). Thus a second (complex) interpolation
step between two strong estimates is necessary in order to
get  \eqref{eq:hebLp}.

The proof of \eqref{eq:hebweaknew} requires
a variant of the Calder\`on-Zygmund decomposition for Sobolev
functions due to Auscher \cite{Auscher07-a}: given $f$ with
$\|\nabla f\|_{L^{1}}<\infty$ and $\lambda>0$, there exists
a sequence of cubes $Q_{j}$ with controlled overlapping (i.e.
$\sum \one{Q_{j}}\le N=N(n)$),
and functions $h,f_{j}$ with $f_{j}\in W^{1}_{0}(Q_{j})$
such that
\begin{equation*}
  f=h+\textstyle\sum_{j}f_{j},\qquad |\nabla h|\le C \lambda,\qquad
   \textstyle\int|\nabla f_{j}|\le C \lambda|Q_{j}|,\qquad
   \sum|Q_{j}|\le C \lambda^{-1}\|\nabla f\|_{L^{1}}.
\end{equation*}
We list the modifications necessary in the preceding proof.
The decomposition is obviously
\begin{equation}\label{eq:3piecesnew}
  \sqrt{H}g(\sqrt{H})f=\sqrt{H}g(\sqrt{H})h +
     \textstyle\sum_{j}\sqrt{H}g(\sqrt{H})\psi_{r_{j}}(\sqrt{H})f_{j}+
     \textstyle\sum_{j}\sqrt{H}(1-\psi_{r_{j}}(\sqrt{H}))f_{j}
\end{equation}
with $r_{j}$ as above. The first piece is estimated using
\eqref{eq:hebassnew} instead of the elementary $L^{2}$ bound,
which gives
\begin{equation*}
  |\{|g(\sqrt{H})h|>\lambda\}|
  \le \lambda^{-q}C_{q}^{q}\|\nabla h\|_{L^{q}}^{q}\le
  CC_{q}^{q} \lambda^{-1}\|\nabla h\|_{L^{1}}\le
  CC_{q}^{q} \lambda^{-1}\|\nabla f\|_{L^{1}}.
\end{equation*}
For the second piece we write as before, but using now the kernel
estimate \eqref{eq:heb2new},
\begin{equation*}
  |(\sqrt{H}\psi_{r_{j}}(\sqrt{H})f_{j},\gamma)_{L^{2}}|\le
  CK_{0}^{2}\iint 
  \left\langle \frac{x-y}{r_{j}}\right\rangle^{-m}r_{j}^{-n-1}
  |\gamma(x)f_{j}(y)|dxdy.
\end{equation*}
Notice that Poincar\'e's inequality implies
\begin{equation*}
  \int|f_{j}(y)|dy\le C r_{j}\int|\nabla f_{j}\|dy\le
  C r_{j}\lambda |Q_{j}|
\end{equation*}
and the factor $r_{j}$ cancels the additional power in $r_{j}^{-n-1}$.
Thus we arrive at
\begin{equation*}
  \sum_{j}
  |(\sqrt{H}\psi_{r_{j}}(\sqrt{H})f_{j},\gamma)_{L^{2}}|\le
  CK_{0}^{2}\lambda \cdot\int_{Q_{j}}M \gamma(z) dz
\end{equation*}
and as above this implies
\begin{equation}\label{eq:partIInew}
  |\{|\sqrt{H}g(\sqrt{H})
  \sum_{j}\psi_{r_{j}}(\sqrt{H})f_{j}|>\lambda\}|
  \le C K_{0}^{4}\|g\|_{L^{\infty}}^{2}\|\nabla f\|_{L^{1}}
  \cdot\lambda^{-1}.
\end{equation}
The third piece is decomposed again as
\begin{equation*}
  III'=\sum_{k\le0}\sqrt{H}g_{k+\lg r_{j}}(\sqrt{H}),\qquad
  g_{j}(s)=g(s)\phi(2^{j}s).
\end{equation*}
Using the kernel estimate \eqref{eq:hebnew} we get now,
with $a=\sigma-n/2$ (so that $a>1$ now)
\begin{equation*}
  |\{|III'|>\lambda\}|\le
  c(n)\lambda^{-1}\|\nabla f\|_{L^{1}}+
  c(n,a)K_{0}\mu'
  \lambda^{-1}
  \sum_{j}\sum_{k\le0}2^{a(k-j)}\|f_{j}\|_{L^{1}}\cdot
  2^{-k}r_{j}^{-1}.
\end{equation*}
Since $a>1$ the sum in $k$ converges with sum bounded by
a constant $c(a)$, and another application of
Poincar\'e's inequality cancels the power $r_{j}^{-1}$.
In conclusion
\begin{equation*}
  |\{|III'|>\lambda\}|\le c(n,a)(1+K_{0}\mu')
  \lambda^{-1}\|\nabla f\|_{L^{1}}
\end{equation*}
and the proof is complete.


\section{Bounded functions of the operator: Theorem \ref{the:1}}
\label{sec:bddfunct}  

\subsection{The Auscher-Martell maximal lemma}\label{sub:AMmaxlemma}  
We reproduce here the
maximal lemma of \cite{AuscherMartell07-a},
in a version slightly simplified for our needs
(i.e., in the original Lemma a finer decomposition in condition
\eqref{eq:cond2} is permitted). 
We decided to include a short but complete proof in the
Appendix, since we needed to keep track precisely
of the constants appearing in the final estimate
\eqref{eq:1lemAM}; this gives the additional bonus of making the
paper self-contained. We also took the liberty
of introducing some minor
simplifications in the final step of the proof.

In the statement of Lemma \ref{lem:auscapp} below,
the quantity $a^{q}/K^{q}$ in \eqref{eq:lemAM} 
must be interpreted as $0$ when $q=\infty$, $MF$ denotes the
uncentered maximal operator over balls $B$
\begin{equation}\label{eq:maxunc}
  Mf(x)=\sup_{B\ni x} \dashint_{B}|f(x)|dx,
\end{equation}
and $c_{q}$ is its norm in the weak $(q,q)$ bound
\begin{equation}\label{eq:Mqqbound}
  \sup_{\lambda>0}\lambda^{q}
  |\{Mf>\lambda\}|\le
  c_{q}\|f\|_{L^{q}}^{q},\qquad 1\le q<\infty,\qquad
  c_{\infty}\equiv1.
\end{equation}
We also recall that a weight $w(x)>0$ belongs
the \emph{reverse H\"older class}
$RH_{q}$, $1<q<\infty$, if there exists a constant $C$ such that
for every cube $Q$
\begin{equation}\label{eq:RHq}
  \left(\dashint_{Q}w^q\right)^{1/q}\le C\dashint_{Q}wdx.
\end{equation}
while $RH_{\infty}$ is defined by the condition
\begin{equation}\label{eq:RHi}
  w(x)\le C\dashint_{Q}wdx \quad\text{for a.e. $x\in Q$}.
\end{equation}
The best constant $C$ in these inequalities is denoted by
$\|w\|_{RH_{q}}$.
We shall use the following consequence of the previous definition:
if $w\in RH_{s'}$ for some $1\le s< \infty$, then there exists
$C$ such that for every cube $Q$
and every measurable subset $E \subseteq Q$
\begin{equation}\label{eq:RHineq}
  \frac{w(E)}{w(Q)}\le \|w\|_{RH_{s'}}
    \left(\frac{|E|}{|Q|}\right)^{\frac1s}
\end{equation}
Indeed, for $s'<\infty$ one can write
\begin{equation*}
  \frac{w(E)}{w(Q)}\le
  \frac{|Q|}{w(Q)}\left(\dashint_{Q}w^{s'}\right)^{\frac{1}{s'}}
    \left(\frac{|E|}{|Q|}\right)^{\frac1s}\le
  \|w\|_{RH_{s'}}\left(\frac{|E|}{|Q|}\right)^{\frac1s}
\end{equation*}
while for $s'=\infty$ the proof is even more elementary.

\begin{lemma}[\cite{AuscherMartell07-a}]
  \label{lem:auscapp}
Let $F,G$ be positive measurable functions on $\mathbb{R}^n$, 
$1<q\leq\infty$, $a\geq1$, $1\leq s<\infty$, $w\in RH_{s'}$. 
Assume that for every ball $B$ there exist $G_B$, $H_B$ positive 
functions such that
\begin{equation}\label{eq:cond1}
  F\leq G_B+H_B\quad \ \text{a.e. on $B$,}
\end{equation}
\begin{equation}\label{eq:cond2}
  \|H_{B}\|_{L^{q}(B)}\le a(MF(x)+G(y))\cdot|B|^{\frac1q}
  \quad\text{for every $x,y\in B$,}
\end{equation}
\begin{equation}\label{eq:cond3}
  \|G_{B}\|_{L^{1}(B)}\le G(x)\cdot|B| \quad\text{for every $x\in B$}.
\end{equation}
Then for all $\lambda>0$, $0<\gamma<1$, $K\ge 2^{n+2}a$, we have,
with $C_{0}=2^{6(n+q)}(c_{1}+c_{q})$,
\begin{equation}\label{eq:lemAM}
  w\{MF>K \lambda,\ G\le \gamma \lambda\}\le
  C_{0}\|w\|_{RH_{s'}}\cdot
  \left(
    \frac{\gamma}{K}+\frac{a^{q}}{K^{q}}
  \right)^{\frac1s}
  \cdot
  w\{MF>\lambda\}.
\end{equation}
As a consequence, if $F$ is $L^{1}$ and $1\le p<q/s$,
\begin{equation}\label{eq:1lemAM}
  \|MF\|_{L^{p}(w)}\le C_{1} \|G\|_{L^{p}(w)},\qquad
  C_{1}=
  \left[(8C_{0}\|w\|_{RH_{s'}}+2^{n+3})a^{p}\right]^{\frac{s}{1-ps/q}}.
\end{equation}
\end{lemma}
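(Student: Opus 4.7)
The plan is to prove \eqref{eq:lemAM} by a Whitney-type decomposition/good-$\lambda$ argument, and then deduce \eqref{eq:1lemAM} by the standard integration of $\lambda^{p-1}$ against both sides. First I would decompose the open set $\Omega_\lambda=\{MF>\lambda\}$ into a Whitney family of essentially disjoint dyadic cubes $\{Q_i\}$ with $\sum|Q_i|=|\Omega_\lambda|$. To each $Q_i$ I associate a ball $B_i\supseteq Q_i$ of radius comparable to $\operatorname{diam}(Q_i)$, chosen to meet $\mathbb{R}^n\setminus\Omega_\lambda$, so that there exists $x_i^\ast\in B_i$ with $MF(x_i^\ast)\le\lambda$. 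A standard truncation of the maximal function then yields the pointwise bound
\begin{equation*}
  MF(x)\le M(F\one{B_i})(x)+c_n\lambda\qquad\text{for every }x\in Q_i,
\end{equation*}
so choosing $K\ge 2^{n+2}a\ge 2c_n$ absorbs the residual term into $K\lambda/2$.

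Cubes $Q_i$ on which $G>\gamma\lambda$ everywhere contribute nothing to the set in \eqref{eq:lemAM}, so I restrict attention to those cubes containing some $y_i\in Q_i$ with $G(y_i)\le\gamma\lambda$. On such a cube, assumption \eqref{eq:cond1} yields $F\one{B_i}\le G_{B_i}\one{B_i}+H_{B_i}\one{B_i}$; combining \eqref{eq:cond3} at $y_i$ with the weak $(1,1)$ inequality for $M$ gives
\begin{equation*}
  |\{x\in Q_i:\ M(G_{B_i}\one{B_i})(x)>K\lambda/4\}|\le \frac{4c_1\gamma}{K}|B_i|,
\end{equation*}
while \eqref{eq:cond2} evaluated at $(x_i^\ast,y_i)$ combined with the weak $(q,q)$ inequality for $M$ gives
\begin{equation*}
  |\{x\in Q_i:\ M(H_{B_i}\one{B_i})(x)>K\lambda/4\}|\le \frac{(8a)^q c_q}{K^q}|B_i|.
\end{equation*}
Summing in $i$ and invoking $\sum|B_i|\le c_n|\Omega_\lambda|$ yields a Lebesgue-measure version of \eqref{eq:lemAM}; the reverse-Hölder inequality \eqref{eq:RHineq} applied cube-by-cube then upgrades this to the weighted form at the cost of a factor $\|w\|_{RH_{s'}}$ and a $1/s$ power on the Lebesgue ratio, producing the stated $C_0=2^{6(n+q)}(c_1+c_q)$ after bookkeeping the geometric constants.

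To deduce \eqref{eq:1lemAM}, I would first replace $F$ by $F\wedge N$ so that $\|MF\|_{L^p(w)}<\infty$, establish the final estimate with a constant independent of $N$, and remove the truncation by monotone convergence. Writing $\mathcal{N}(\lambda)=w\{MF>\lambda\}$ and combining \eqref{eq:lemAM} with the trivial split $\mathcal{N}(K\lambda)\le w\{MF>K\lambda,\,G\le\gamma\lambda\}+w\{G>\gamma\lambda\}$, multiplying by $p\lambda^{p-1}$ and integrating over $(0,\infty)$ gives
\begin{equation*}
  K^{-p}\|MF\|_{L^p(w)}^p\le C_0\|w\|_{RH_{s'}}\bigl(\gamma/K+a^q/K^q\bigr)^{1/s}\|MF\|_{L^p(w)}^p+\gamma^{-p}\|G\|_{L^p(w)}^p.
\end{equation*}
Setting $\gamma/K=a^q/K^q=\epsilon$ (so $K=a\epsilon^{-1/q}$ and $\gamma=a\epsilon^{1-1/q}$) reduces the problem to picking a single $\epsilon>0$ so that the first right-hand term can be absorbed into the left. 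Since $p<q/s$ the exponent $1/s-p/q$ is strictly positive, such a choice exists, and optimizing in $\epsilon$ in the resulting bound $\|MF\|_{L^p(w)}^p\le 2K^p\gamma^{-p}\|G\|_{L^p(w)}^p$ produces the explicit exponent $sq/(q-ps)=s/(1-ps/q)$ and base $\|w\|_{RH_{s'}}a^p$. The main obstacle is precisely this final bookkeeping: tracking the numerical constants ($c_1+c_q$, $2^{n+3}$, the factor $8$) through the Whitney overlap, the dyadic weak-type bounds for $M$, the reverse-Hölder conversion, and the absorption step so that the explicit constant $C_1$ in \eqref{eq:1lemAM} comes out exactly as stated.
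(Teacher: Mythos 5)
Your Whitney decomposition, localization of $MF$, application of the weak $(1,1)$ and $(q,q)$ maximal bounds, and the reverse-H\"older upgrade all match the paper's argument for \eqref{eq:lemAM}, and your parameter choice $\gamma/K=a^q/K^q$ does reproduce the exponent $s/(1-ps/q)$ after the bookkeeping; that part is fine. The gap lies in how you pass from the good-$\lambda$ inequality to the strong $L^p(w)$ bound \eqref{eq:1lemAM}.

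You propose to integrate $p\lambda^{p-1}$ against both sides of the good-$\lambda$ inequality and absorb the resulting $\|MF\|_{L^p(w)}^p$ term, after ``first replacing $F$ by $F\wedge N$ so that $\|MF\|_{L^p(w)}<\infty$.'' Truncating $F$ does not achieve this. Even with $F$ bounded and in $L^1$, the set $\{MF>\lambda\}$ is unbounded for small $\lambda$, and $w\{MF>\lambda\}$ can be infinite when $w$ is not bounded — nothing in $w\in RH_{s'}$ forbids this. Moreover the statement covers $p=1$, where $MF$ is never integrable for nontrivial $F$, so even truncating $w$ as well would not rescue a direct absorption. The paper sidesteps this by a dyadic argument: it sets $c_j=\int_{K^j}^{K^{j+1}}p\lambda^p\,w\{MF>\lambda\}\,\frac{d\lambda}{\lambda}$ and $d_j$ similarly for $MG$, derives the one-step recursion $c_j\le\frac12 c_{j-1}+(K/\gamma)^p d_j$ from the good-$\lambda$ inequality, and sums a telescoping family of these. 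Finiteness of $\sum c_j$ then comes out as a \emph{conclusion} (from $\sum d_j\le\gamma^{-p}\|G\|_{L^p(w)}^p<\infty$ together with the uniform boundedness of a single $c_{-N-1}$), rather than being an a priori input; the single-term bound on $c_j$ for $j<0$ is what the truncation $w_R=\min\{w,R\}$ of the \emph{weight} (not of $F$) is used for, after which $R\to\infty$. You would need to replace your absorption step by some such device, or at minimum explain why $\|MF\|_{L^p(w)}^p<\infty$ may be assumed, to close this gap.
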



\subsection{Proof of Theorem \ref{the:1}}\label{sub:the1}  
Assume for the moment $w\in RH_{s'}$ for some $1\le s< \infty$; 
at the end of the proof we shall optimize the choice in order 
to handle a generic weight in $A_{r}$. Moreover, fix a
$\nu>1$ so large that $\sigma>n/\nu$ i.e. $\nu>n/\sigma$.

Given any test function $f$, set $F(x)=|g(\sqrt{H})f|^{\nu}$,
which is in $L^{1}$
by Theorem \ref{the:hebweakest}. Then, for any ball $B$ define,
with $\psi_{r}(s)=\psi(rs)$,
\begin{equation*}
  G_{B}=2^{\nu}|g(\sqrt{H})(1-\psi_{r}(\sqrt{H}))f|^{\nu},\quad
  H_{B}=2^{\nu}|g(\sqrt{H})\psi_{r}(\sqrt{H})f|^{\nu}
\end{equation*}
where $r$ is the radius of the ball $B$.
We will show now that with these choices the assumptions of the 
maximal lemma are satisfied. Clearly we have $F\le G_{B}+H_{B}$ 
a.e.~on $\mathbb{R}^{n}$.

We check that assumption \eqref{eq:cond2} 
holds with $q=\infty$. 
For any $z\in B$ we have, writing for short $T=g(\sqrt{H})$,
\begin{equation*}
  |T\psi_{r}(\sqrt{H})f(z)|\le
    \int |\psi_{r}(\sqrt{H})(z,y)|\cdot|Tf(y)|dy=I.
 \end{equation*}
We can apply Lemma \ref{lem:heb2} with $m=n+1$;
writing $B_{j}=2^{j}B$, $j\ge0$, $B_{-1}=\emptyset$,
we have
\begin{equation*}
  I\le C(n,\psi)K_{0}^{2}
     r^{-n}\sum_{j\ge0}
     \int_{B_{j}\setminus B_{j-1}}
     \left\langle \frac{z-y}r \right\rangle ^{-n-1}|Tf(y)|dy
\end{equation*}
and using $\langle |z-y|/r\rangle \ge 2^{j-1}$ and
$|B_{j}|=2^{nj}r^{n}\omega_{n}$, we obtain
\begin{equation*}
  I\le C(n,\psi)K_{0}^{2}2^{n+1}\omega_{n}
     \sum_{j\ge0}2^{-j}
     \dashint_{B_{j}}|Tf(y)|dy.
\end{equation*}
Now if $x\in B$ and $B'=B(x,r)$, $B'_{j}=2^{j}B'$, we have
\begin{equation*}
  \dashint_{B_{j}}|Tf(y)|dy\le
  c(n) \left(\dashint_{B'_{j+1}}|Tf(y)|^{\nu}dy\right)^{\frac1\nu}\le
  c(n) \cdot MF(x)^{1/\nu}
\end{equation*}
and we obtain \eqref{eq:cond2} with $q=\infty$:
\begin{equation}\label{eq:cond2b}
  |H_{B}(z)|=
    2^{\nu}|T\psi_{r}(\sqrt{H})f(z)|^{\nu}\le
    aMF(x),\qquad
    a=c(n,\psi,\nu)K_{0}^{2\nu}.
\end{equation}

Consider now the remaining term, which we split as
\begin{equation*}
  G_{B}=2^{\nu}
  |g(\sqrt{H})(1-\psi_{r}(\sqrt{H}))f|^{\nu}\le 4^{\nu}(II^{\nu}+III^{\nu})
\end{equation*}
where
\begin{equation*}
  II= |g(\sqrt{H})(1-\psi_{r}(\sqrt{H}))f_{1}|,\qquad
  III=|g(\sqrt{H})(1-\psi_{r}(\sqrt{H}))f_{2}|,
\end{equation*}
\begin{equation*}
  f_{1}=f \cdot\one{4B},\qquad 
  f_{2}=f \cdot\one{\mathbb{R}^{n}\setminus 4B}.
\end{equation*}
For the piece $II$ we use Theorem \ref{the:hebweakest}
(recall that we can take $\nu>>1$):
\begin{equation*}
  \|II\|_{L^{\nu}(B)}\le \nu \cdot
  c(n,\sigma)K_{0}^{4}(1+\mu+\|g\|_{L^{\infty}}^{2})
  \|(1-\psi_{r}(\sqrt{H}))f_{1}\|_{L^{\nu}}.
\end{equation*}
Notice that
\begin{equation*}
  \|(1-\psi_{r}(\sqrt{H}))f_{1}\|_{L^{\nu}}\le
  \|\psi_{r}(\sqrt{H})f_{1}\|_{L^{\nu}}
  +\|f_{1}\|_{L^{\nu}}
\end{equation*}
and using \eqref{eq:heb2} with $m=n+1$ we see that
\begin{equation*}
  \|\psi_{r}(\sqrt{H})f_{1}\|_{L^{\nu}}\le 
  c(n,\psi) K_{0}^{2}\|f_{1}\|_{L^{\nu}}
\end{equation*}
which implies
\begin{equation*}
  \|II\|_{L^{\nu}(B)}\le cK_{0}^{6}
    (1+\mu+\|g\|_{L^{\infty}}^{2})\|f_{1}\|_{L^{\nu}}.
\end{equation*}
Estimating with the maximal function we obtain
\begin{equation}\label{eq:pieceII}
  \|II\|_{L^{\nu}(B)}\le c(n,\sigma,\psi)K_{0}^{6}
    (1+\mu+\|g\|_{L^{\infty}}^{2})
  \cdot r^{n/\nu}\cdot M(|f|^{\nu})(x)^{1/\nu}\qquad
  \forall x\in B.
\end{equation}
We can now focus on the piece $III$; we write
\begin{equation*}
  1-\psi(s)= \sum_{k\le0}\phi(2^{k}s) \text{\ \ \ for $s>0$}
\end{equation*}
and hence, using the notation $\lg r=\log_{2}r$,
\begin{equation*}
  1-\psi_{r}(s)=1-\psi(rs)=
  \sum_{k\le0}\phi(2^{k}rs)\equiv
  \sum_{k\le0}\phi(2^{k+\lg r}s)
  \text{\ \ \ for $s>0$}
\end{equation*}
which implies
\begin{equation*}
  g(\sqrt{H})(1-\psi_{r}(\sqrt{H}))=\sum_{k\le0}g_{k+\lg r}(\sqrt{H}),\qquad
  g_{j}(s)=g(s)\phi(2^{j}s).
\end{equation*}
Denote by $a_{k}(x,y)$ the kernel of $g_{k+\lg r}(\sqrt{H})$, then we have
($B_{j}=2^{j}B$)
\begin{equation*}
  \|g_{k+\lg r}(\sqrt{H})f_{2}\|_{L^{2}(B)}\le
  \sum_{j\ge3}
  \left\|
  \int_{B_{j}\setminus B_{j-1}}|a_{k}(z,y)f_{2}(y)|dy
  \right\|_{L^{2}_{z}(B)}.
\end{equation*}
Now by H\"older's inequality
\begin{equation*}
  \left\|\int_{A}|a(z,y)f(y)|dy\right\|_{L^{\nu}_{z}(B)}\le
  C
  \|f\|_{L^{\nu}(A)}
\end{equation*}
where
\begin{equation}\label{eq:schur}
  C=\max\left\{
  \sup_{z\in A}\left(\int_{B}|a(z,y)|dy\right),
  \sup_{z\in B}\left(\int_{A}|a(z,y)|dy\right)
  \right\}.
\end{equation}
Moreover, Lemma \ref{lem:heb1} and assumption \eqref{eq:assbddg}
ensure that
\begin{equation}\label{eq:estak}
  \|a_{k}\|_{\langle 2^{k}r^{-1}x\rangle^{\sigma}}\le
  c(n,\sigma)K_{0}\mu.
\end{equation}
We notice that for $z\in B$ and $y\in B_{j}\setminus B_{j-1}$,
$j\ge2$, $k\le0$, one has
\begin{equation*}
  \frac{|z-y|}{2^{k}r}\ge 2^{j-k-2}\ge1\ \implies\ 
  \left\langle \frac{z-y}{2^{k}r}\right\rangle ^{\sigma}\ge
  4^{-\sigma}2^{\sigma(j-k)}
\end{equation*}
which together with \eqref{eq:estak} implies for \eqref{eq:schur}
\begin{equation*}
  C
  \le
  c(n,\sigma)K_{0}\mu \cdot 2^{\sigma(k-j)}
\end{equation*}
amd hence
\begin{equation*}
  \left\|
  \int_{B_{j}\setminus B_{j-1}}|a_{k}(z,y)f_{2}(y)|dy
  \right\|_{L^{\nu}_{z}(B)}\le c(n,\sigma)K_{0}\mu \cdot 2^{\sigma(k-j)}
  \|f\|_{L^{\nu}(B_{j}\setminus B_{j-1})}.
\end{equation*}
Now let $x\in B$ arbitrary and $B'=B(x,r)$, $B'_{j}=2^{j}B$, then
\begin{equation*}
  \|f\|_{L^{\nu}(B_{j}\setminus B_{j-1})}\le
  \|f\|_{L^{\nu}(B'_{j+1})}\le
  c_{n}2^{nj/\nu}r^{n/\nu}\cdot M(|f|^{\nu})(x)^{1/\nu},
\end{equation*}
thus we have proved for all $x\in B$
\begin{equation*}
  \left\|
  \int_{B_{j}\setminus B_{j-1}}|a_{k}(z,y)f_{2}(y)|dy
  \right\|_{L^{\nu}_{z}(B)}\le
  c(n,\sigma)K_{0}\mu \cdot 2^{\sigma(k-j)}2^{nj/\nu}r^{n/\nu}
  M(|f|^{\nu})(x)^{1/\nu}.
\end{equation*}
Summing over $j\ge3$, since $\sigma>n/\nu$ we get
\begin{equation}\label{eq:estgk}
  \|g_{k+\lg r}(\sqrt{H})f_{2}\|_{L^{2}(B)}\le
  c(n,\sigma)K_{0}\mu \cdot 2^{k\sigma}r^{n/\nu}\cdot
  M(|f|^{\nu})(x)^{1/\nu}.
\end{equation}
and summing over $k\le0$, and recalling \eqref{eq:pieceII}, we conclude
\begin{equation}\label{eq:estGB}
  \begin{split}
    \|G_{B}\|_{L^{1}(B)}\le & 
    4^{\nu}\|II\|^{\nu}_{L^{\nu}(B)}+
    4^{\nu}\|III\|^{\nu}_{L^{\nu}(B)}
  \\
     \le & \nu^{\nu}  c(n,\sigma)^{\nu}
     K_{0}^{\nu}(1+\mu+\|g\|_{L^{\infty}}^{2})^{\nu} 
     \cdot M(|f|^{\nu})(x)\cdot|B|.
  \end{split}
\end{equation}
This proves \eqref{eq:cond3} with the choice
\begin{equation}\label{eq:choiceG}
  G(x)=\nu^{\nu} 
  c(n,\sigma)^{\nu}
  K_{0}^{\nu}(1+\mu+\|g\|_{L^{\infty}}^{2})^{\nu}  
  \cdot M(|f|^{\nu})(x)
\end{equation}

We are finally in position to apply Lemma \ref{lem:auscapp} and
we obtain, for all $1\le p<\infty$, and any weight
$w\in RH_{s'}$ for some $1\le s<\infty$,
\begin{equation}\label{eq:final}
  \|F\|_{L^{p}(w)}\le\|MF\|_{L^{p}(w)}\le
  C_{1}
  \|G\|_{L^{p}(w)}
\end{equation}
where in our case
\begin{equation*}
  C_{1}=c(n,\sigma,\psi,p,s)(\|w\|_{RH_{s'}}+1)^{s}
  K_{0}^{2ps\nu},
\end{equation*}
that is to say
\begin{equation}\label{eq:interm}
  \|g(\sqrt{H})f\|^{\nu}_{L^{p\nu}(w)}\le C_{2}
  \|M(|f|^{\nu})\|_{L^{p}(w)}
\end{equation}
where
\begin{equation*}
  C_{2}=\nu^{\nu}c(n,\sigma,\psi,p,s)^{\nu}(\|w\|_{RH_{s'}}+1)^{s}
  K_{0}^{\nu+2ps\nu}(1+\mu+\|g\|_{L^{\infty}}^{2})^{\nu}
\end{equation*}

Now, assume the weight is in some $A_{p}$; 
recalling that $\cup_{1\le p<\infty}A_{p}=\cup_{1<q\le \infty}RH_{q}$,
we have also $w\in RH_{s'}$ for some $1\le s<\infty$, and
all the previous computations apply. Since the maximal
operator is bounded on $L^{p}(w)$, we deduce from \eqref{eq:interm}
\begin{equation*}
  \|g(\sqrt{H})f\|_{L^{p\nu}(w)}\le C_{3}
  \|f\|_{L^{p\nu}(w)}
\end{equation*}
where
\begin{equation*}
  C_{3}=\nu \cdot
  c(n,\sigma,\psi,p,w)K_{0}^{1+2p^{2}}
  (1+\mu+\|g\|_{L^{\infty}}^{2}).
\end{equation*}
Let $q=\nu p$; since we can take $\nu>n/\sigma$ 
(provided $\nu>1$) arbitrarily large,
we see that we have proved \eqref{eq:FH} for all 
$q>\max\{p,pn/\sigma\}$,
with a constant
\begin{equation*}
  \frac qp \cdot
  c(n,\sigma,\psi,p,w)K_{0}^{1+2p^{2}}
  (1+\mu+\|g\|_{L^{\infty}}^{2})=
  c'(n,\sigma,\psi,p,w)K_{0}^{1+2p^{2}}
  (1+\mu+\|g\|_{L^{\infty}}^{2})q
\end{equation*}
as claimed.



\section{The electromagnetic laplacian}\label{sec:electromagnetic}  

In this section we verify that an electromagnetic Laplacian
\begin{equation*}
  H=(i\nabla-A(x))^{2}+V(x)
\end{equation*}
satisfies Assumption (H), under
suitable (very weak) regularity and integrability conditions
on the coefficients. 
We recall that a measurable function $V$ on $\mathbb{R}^{n}$
is in the \emph{Kato class} when
\begin{equation*}
  \sup_{x}\lim_{r \downarrow0}
  \int_{|x-y|<r} \frac{|V(y)|}{|x-y|^{n-2}}dy,\qquad (n\ge3)
\end{equation*}
while the \emph{Kato norm} is defined by
\begin{equation*}
  \|V\|_{K}=\sup_{x}\int \frac{|V(y)|}{|x-y|^{n-2}}dy \qquad
  (n\ge3)
\end{equation*}
(replace $|x-y|^{2-n}$ with $\log|x-y|$ in dimension $n=2$).

Our conditions will be based on 
the following result, which is obtained by combining an
heat kernel estimate from \cite{DanconaPierfelice05-a}
with Simon's diamagnetic inequality:

\begin{proposition}\label{pro:heatk}
  Consider the Schr\"odinger operator $H=(i\nabla-A(x))^{2}+V(x)$
  on $L^{2}(\mathbb{R}^{n})$, $n\ge3$.
  Assume that $A\in L^{2}_{loc}(\mathbb{R}^n, \mathbb{R}^n)$, 
  moreover the positive
  and negative parts $V_{\pm}$ of $V$ satisfy
  \begin{equation}\label{eq:Vpiu}
    V_{+}\ \text{is of Kato class},
  \end{equation}
  \begin{equation}\label{eq:Vmeno}
    \|V_{-}\|_K< c_{n}=\pi^{n/2}/\Gamma\left(n/2-1\right).
  \end{equation}
  Then $H$ has a unioque nonnegative selfadjoint extension,
  $e^{-tH}$ is an integral operator whose kernel
  satisfies the pointwise estimate
  \begin{equation}\label{eq:heatest}
    |e^{-tH}(x,y)|\le \frac{K_{0}}{t^{n/2}}
      e^{-|x-y|^{2}/(8t)},\qquad
      K_{0}=\frac{(2\pi )^{-n/2}}{1-\|V_{-}\|_{K}/c_{n}}.
  \end{equation}
\end{proposition}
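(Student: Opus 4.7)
The plan is to establish the result in two stages: first handle the self-adjointness and the non-magnetic heat kernel bound, and then use the diamagnetic inequality to transfer the bound to the magnetic case.

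First I would define $H$ via its associated quadratic form. With $A \in L^{2}_{loc}$ the form $\int |(i\nabla - A)u|^{2}dx$ is well defined on $C^{\infty}_{c}$; adding the Kato-class perturbation $V$ produces a form that, thanks to the smallness assumption $\|V_{-}\|_{K}<c_{n}$, is closed and bounded below (in fact nonnegative, since the Kato norm controls the quadratic form of $V_{-}$ relative to the Dirichlet form through the explicit constant $c_{n}=\pi^{n/2}/\Gamma(n/2-1)$, which is the sharp constant in the Hardy-type inequality $\int V_{-}|u|^{2}\le (\|V_{-}\|_{K}/c_{n})\|\nabla u\|_{L^{2}}^{2}$ applied after the diamagnetic inequality $|\nabla |u||\le|(i\nabla-A)u|$). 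The Friedrichs extension then gives a unique nonnegative selfadjoint realization of $H$.

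Second, I would invoke Simon's diamagnetic (or Kato-Simon) inequality, which asserts the pointwise domination
\begin{equation*}
  |e^{-tH}(x,y)|\le e^{-tH_{0}}(x,y),\qquad H_{0}=-\Delta+V(x).
\end{equation*}
This reduces the proof of the pointwise gaussian bound \eqref{eq:heatest} to the corresponding bound for the purely electric Schr\"odinger operator $H_{0}$.

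Third, I would quote the heat kernel estimate from \cite{DanconaPierfelice05-a}: under the Kato-class assumption on $V_{+}$ and the smallness condition $\|V_{-}\|_{K}<c_{n}$, the kernel of $e^{-tH_{0}}$ satisfies
\begin{equation*}
  0\le e^{-tH_{0}}(x,y)\le\frac{(2\pi)^{-n/2}}{1-\|V_{-}\|_{K}/c_{n}}\,t^{-n/2}e^{-|x-y|^{2}/(8t)},
\end{equation*}
which is exactly the bound with the constant $K_{0}$ given in the statement. Combining this with the diamagnetic inequality yields \eqref{eq:heatest}.

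The only delicate point is the closedness and semiboundedness of the magnetic form under only $A\in L^{2}_{loc}$ and a Kato-class $V_{+}$ (with small negative part); this is a well-known result of Leinfelder-Simader (for the magnetic part) combined with the KLMN theorem (for the potential part), and it justifies defining $H$ as a genuine selfadjoint operator rather than just as a formal expression. Once this is in place, the heart of the proof is really just the pointwise domination by the non-magnetic kernel together with the cited estimate for $e^{-tH_{0}}$, so no new calculation is required.
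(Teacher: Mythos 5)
Your proposal takes essentially the same route as the paper: it reduces the magnetic heat-kernel bound to the non-magnetic one via Simon's diamagnetic pointwise inequality, and then quotes the Gaussian bound for $e^{-t(-\Delta+V)}$ with the explicit constant $K_{0}$ from Proposition~5.1 of \cite{DanconaPierfelice05-a}. The paper's proof is the same argument, only terser: it cites these two facts directly (passing to the kernel inequality via a delta sequence of test functions) and does not pause over the form-theoretic construction of the self-adjoint realization, which it takes from the cited references; your parenthetical Hardy-type bound $\int V_{-}|u|^{2}\le(\|V_{-}\|_{K}/c_{n})\|\nabla u\|^{2}_{L^2}$ is in fact off by a numerical factor, but this side remark plays no role in the argument.
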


\begin{proof}
  Simon's diamagnetic pontwise inequality
  (see Theorem B.13.2 in \cite{Simon82-a}), which holds under weaker
  assumptions, states that for any test function $\phi(x)$,
  \begin{equation*}
    |e^{t[(\nabla-iA(x))^{2}-V]}\phi|\le
    e^{t (\Delta-V)}|\phi|.
  \end{equation*}
  By choosing a delta sequence $\phi_{\epsilon}$ of
  test functions, this implies an analogous pointwise inequality
  for the corresponding heat kernels. Now we can apply the second
  part of Proposition 5.1 in \cite{DanconaPierfelice05-a} 
  which gives precisely
  estimate \eqref{eq:heatest} for the heat kernel of
  $e^{-t (\Delta-V)}$ under \eqref{eq:Vpiu}, \eqref{eq:Vmeno}.
\end{proof}


\section{Fractional powers: proof of Corollary \ref{the:2}}\label{sec:th2}  

Theorem \ref{the:2} will be proved via
Stein-Weiss interpolation for a suitable analytic family of operators
We need the following lemma:

\begin{lemma}\label{lem:lap}
  Assume $n\ge3$, $1<p<n/2$, and let $w(x)$ be a weight of class $A_{p}$.
  Then the operator $H=(i \nabla-A)^{2}+V$ satisfies the estimate
  \begin{equation}\label{H-lapeq}
    \| Hg\|_{L^p(w)}\leq
    c(n,p,w) \cdot(\||A|^2-i\nabla\cdot A+V\|_{L^{n/2}}+
    \| A\|_{L^n}+1)
    \| (-\Delta)g\|_{L^p(w)}
  \end{equation}
\end{lemma}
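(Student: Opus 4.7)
\textit{Plan.} Expand the electromagnetic operator as a perturbation of the Laplacian:
$$Hg = -\Delta g + L_1 g + L_0 g, \qquad L_1 := -2iA\cdot\nabla, \qquad L_0 := |A|^2 - i\nabla\cdot A + V.$$
The triangle inequality in $L^p(w)$ gives
$$\|Hg\|_{L^p(w)} \le \|(-\Delta)g\|_{L^p(w)} + \|L_1 g\|_{L^p(w)} + \|L_0 g\|_{L^p(w)},$$
so that the first term is already of the desired form and is absorbed into the ``$+1$'' factor of \eqref{H-lapeq}. It remains to control the two perturbation terms in terms of $\|(-\Delta)g\|_{L^p(w)}$.

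For the zero-order perturbation, Hölder's inequality with exponents $n/(2p)$ and $n/(n-2p)$ (legitimate since $1<p<n/2$), applied under the $L^p(w)$ integral, yields
$$\|L_0 g\|_{L^p(w)} \le \||A|^2 - i\nabla\cdot A + V\|_{L^{n/2}}\,\|g\|_{L^{q}(w^{q/p})}, \qquad \frac{1}{q} = \frac{1}{p}-\frac{2}{n}.$$
An analogous computation handles the first-order perturbation:
$$\|L_1 g\|_{L^p(w)} \le 2\|A\|_{L^n}\,\|\nabla g\|_{L^{r}(w^{r/p})}, \qquad \frac{1}{r} = \frac{1}{p}-\frac{1}{n}.$$
It thus suffices to prove the two weighted Sobolev-type estimates
$$\|g\|_{L^{q}(w^{q/p})} + \|\nabla g\|_{L^{r}(w^{r/p})} \le C(n,p,w)\,\|(-\Delta)g\|_{L^p(w)}.$$

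Setting $f = -\Delta g$, we have $g = c_n I_2 f$ pointwise, and $\nabla g$ is a composition of Riesz transforms with $I_1 f$. Both inequalities then reduce to the Muckenhoupt--Wheeden weighted Riesz potential theorem $I_\alpha : L^p(v^p) \to L^{q}(v^{q})$ valid for any $v$ in the two-weight Muckenhoupt class $A_{p,q}$ with $1/q = 1/p - \alpha/n$, applied with $v = w^{1/p}$; the gradient case additionally uses boundedness of the Riesz transforms on $L^r(w^{r/p})$, which is standard provided $w^{r/p}\in A_r$. Combining the three contributions yields the factorization announced in \eqref{H-lapeq}.

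\textit{Main obstacle.} The real work is verifying that $w\in A_p$ implies $w^{1/p} \in A_{p,q}$ and $A_{p,r}$ (and similarly for the Riesz-transform step), with quantitative constants depending only on $n$, $p$ and $\|w\|_{A_p}$. The $A_p$ condition only controls $(\dashint w)^{1/p}(\dashint w^{-p'/p})^{1/p'}$, whereas the $A_{p,q}$ condition requires the strictly larger quantity $(\dashint w^{q/p})^{1/q}(\dashint w^{-p'/p})^{1/p'}$; the passage between them relies crucially on the self-improvement of $A_p$ weights via the reverse Hölder inequality, which produces the extra integrability $w\in RH_{1+\delta}$ with $\delta = \delta(\|w\|_{A_p})$ needed to accommodate the exponent jump from $1$ to $q/p$ (resp.\ $r/p$). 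It is precisely through this reverse-Hölder constant that the unspecified dependence $c(n,p,w)$ in the statement enters.
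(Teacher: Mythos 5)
Your plan mirrors the paper's proof at the structural level: expand $H=-\Delta+L_1+L_0$ with $L_1=-2iA\cdot\nabla$ and $L_0=|A|^2-i\nabla\cdot A+V$, apply H\"older to pull off the coefficients, and reduce the two remaining inequalities to the weighted Muckenhoupt--Wheeden Sobolev estimates for $I_1$ and $I_2$. (The paper performs the H\"older step in Lorentz spaces, $L^{n/2,\infty}$ and $L^{n,\infty}$, whereas you use $L^{n/2}$ and $L^n$; for the lemma as stated this is cosmetic.) Up to this point the two proofs coincide.

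The genuine gap is in your \emph{main obstacle} paragraph, which is where the proof's whole weight lies. You correctly observe that the $A_{p,q}$ condition for $v=w^{1/p}$ is strictly stronger than $w\in A_p$: written out it is $w^{q/p}\in A_{1+q/p'}$, equivalently $w\in A_p\cap RH_{q/p}$. But here $q/p=\tfrac{n}{n-2p}$ for the zero-order term, and this blows up as $p\to n/2$. Your proposed bridge --- the reverse H\"older self-improvement $w\in RH_{1+\delta}$ with $\delta=\delta(\|w\|_{A_p})$ --- cannot close this gap: $\delta$ is small and shrinks as $\|w\|_{A_p}$ grows, so $RH_{1+\delta}$ cannot reach $RH_{n/(n-2p)}$ for a general $w\in A_p$ once $p$ is close to $n/2$. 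Concretely, $w=|x|^a$ with $a$ just above $-n$ lies in $A_p$, yet $w^{q/p}=|x|^{aq/p}$ is not even locally integrable when $aq/p<-n$, so there is no hope that $w^{q/p}$ belongs to any $A_s$ class. Thus the step ``self-improvement supplies the missing integrability'' is false as stated, and the argument as written does not prove the lemma.

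At this same juncture the paper does something different: it states the Muckenhoupt--Wheeden hypothesis in the form $v\in A_{2-1/p}\cap RH_q$, then invokes the algebraic identity $v\in A_r\cap RH_q\Leftrightarrow v^q\in A_{q(r-1)+1}$ (applied with $q=p$, giving $v\in A_{2-1/p}\cap RH_p\Leftrightarrow w\in A_p$) together with the nesting of the $RH$ classes. This is a different mechanism from the self-improvement you invoke, and it is the one you would need to reproduce; note in passing that even the paper's own conclusion at this step (``all conditions collapse to $w\in A_p$'') is delicate, since the nesting $RH_{p^{**}}\subset RH_{p^*}\subset RH_p$ means the required $RH_{p^{**}}$ is the strongest condition, not the weakest. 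In any case, the reverse-H\"older self-improvement route you propose is quantitatively too weak to supply $RH_{q/p}$ and cannot replace it.
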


\begin{proof}
Setting $w=v^p$, the right hand side of \eqref{H-lapeq} 
can be written $\|vHg\|_{L^{p}}$.
If we expand the operator $H$ and use H\"older's inequality 
for Lorentz spaces we find
\begin{equation*}
  \|vHg\| _{L^p}\le
   \||A|^2-i\nabla\cdot A+V\|_{L^{n/2,\infty}}\|v g\|_{L^{p^{**},p}}+
       2\|A\|_{L^{n,\infty}}\|v \nabla g\|_{L^{p^{*},p}}
\end{equation*}
where
\begin{equation*}
  p^{*}=\frac{np}{n-p},\qquad
  p^{**}=\frac{np}{n-2p}.
\end{equation*}
We can use now the weighted version of Sobolev embeddings proved
by Muckenhopt and Wheeden (see \cite{MuckenhouptWheeden74-a}
and \cite{AuscherMartell08-a}).
Recall also the definition of the reverse H\"older class
\eqref{eq:RHq} -- \eqref{eq:RHi}.

\begin{theorem}\label{MW}
  For $1<p\le q<\infty$ we have
  $$
  \|v (-\Delta)^{-\alpha/2}g\|_{L^q}\leq C\|v g\|_{L^p}
  $$
  provided $\displaystyle\frac{\alpha}{n}=\frac{1}{p}-\frac{1}{q}$ and
  $v\in A_{2-\frac{1}{p}}\cap RH_q$.
\end{theorem}

By real interpolation the preceding estimates extend easily
to Lorentz spaces as follows
\begin{equation}\label{eq:lor}
  \|v (-\Delta)^{-\alpha/2}g\|_{L^{q,p}}\leq C\|v g\|_{L^p},
\end{equation}
under the same conditions on $p,q,w$.
Notice that this result for $\alpha=1,2$, combined with 
the boundedness of the Riesz operator $\nabla(-\Delta)^{-1/2}$ 
in weighted spaces, gives precisely the estimates we need:
\begin{equation*}
  \|v g\|_{L^{p^{**},p}}\le C\|v(-\Delta)g\|_{L^{p}},\qquad
  \|v \nabla g\|_{L^{p^{*,p}}}\le C\|v(-\Delta)g\|_{L^{p}}
\end{equation*}
as soon as the weights are in the appropriate classes.
In order to apply Theorem \ref{MW} we must require that
\begin{equation*}
  v=w^{1/p}\in 
    A_{2-\frac{1}{p}}\cap RH_\frac{np}{n-p}\cap RH_\frac{np}{n-2p}
\end{equation*}
We now use a few basic properties of weighted spaces and 
reverse H\"{o}lder classes (for more details see 
\cite{Garcia-CuervaRubio-de-Francia85-a}).
First of all, for $1\leq r\leq\infty$ and $1<q<\infty$ one has
$$
v\in A_r\cap RH_q\Leftrightarrow v^q\in A_{q(r-1)+1}.
$$
Setting $q=p=q(r-1)+1$, which implies $r=2-1/p$, we obtain 
$$
v\in A_{2-\frac{1}{p}}\cap RH_p\Leftrightarrow w=v^p\in A_p.
$$
Since the classes $RH_q$ are decreasing in $q$, i.e.
$$
RH_\infty\subset RH_q\subset RH_p,\quad\textrm{for}\:1<p\leq q\leq\infty
$$
and $p<p^{*}<p^{**}$, all conditions on $v$ collapse to $w\in A_p$
and the proof is concluded.
\end{proof}

Now fix $1<p_{0}<\infty$, $1<p_{1}<n/2$, and two weights
$w_{0}\in A_{p_{0}}$,
$w_{1}\in A_{p_{1}}$, and consider the family of operators
for $z$ in the strip $0\le \Re z\le1$
\begin{equation*}
  T_{z}=w_{z}H^{z}(-\Delta)^{-z}w_{z}^{-1},
  \qquad
  w_{z}^{\frac{1}{p_{z}}}=
    w_{0}^{\frac{1-z}{p_{0}}}w_{1}^{\frac{z}{p_{1}}},
  \qquad
  \frac{1}{p_{z}}=\frac{1-z}{p_{0}}+\frac{z}{p_{1}}.
\end{equation*}
We follow here the standard theory of
\cite{SteinWeiss71-a} (see Theorem V.4.1),
and in particular
the operators $T_{z}$ are defined on simple functions 
$\phi$ belonging to
$L^{1}(\mathbb{R}^{n})$, with values into measurable functions.
Moreover, we have
\begin{equation*}
  |T_{1+iy}\phi|=w_{1}^{\frac{1}{p_{1}}}|H^{iy}H(-\Delta)(-\Delta)^{-iy}
     w_{1}^{-\frac{1}{p_{1}}}
     (w_{0}^{1/p_{0}}w_{1}^{-1/p_{1}})^{iy}
     \phi|.
\end{equation*}
The function $g(s)=s^{2iy}$ satisfies 
$\mu_{\sigma}(g)\le C(1+|y|)^{\sigma}< \infty$ for all $\sigma$
(see Remark \ref{rem:compareheb}), so choosing e.g.~$\sigma=n+1$,
by the weighted estimate \eqref{eq:FH} we have that $H^{iy}$ is
bounded on $L^{q}(w)$ for all $w\in A_{p}$ and all
$q\ge p$ (actually $q>p-\epsilon$ as per Remark \ref{rem:Ap}).
This applies also to the special case of the operator $(-\Delta)^{iy}$.
Combining \eqref{eq:FH} with 
Lemma \ref{lem:lap}, we deduce
\begin{equation*}
  \|T_{1+iy}\phi\|_{L^{p_{1}}}\le 
  c(n,p_{1},w_{1})K_{0}^{1+2p_{1}^{2}}
  C(A,V)
  (1+|y|)^{n+1}
  \|\phi\|_{L^{p_{1}}},
\end{equation*}
where
\begin{equation}\label{eq:CAV}
  C(A,V)=\||A|^2-i\nabla\cdot A+V\|_{L^{n/2}}+
  \| A\|_{L^n}+1.
\end{equation}
Notice in particular the polynomial growth in $y$
which ensures that $T_{z}$ is an admissible family
in the sense of \cite{SteinWeiss71-a}.
On the other hand we have
\begin{equation*}
  |T_{iy}\phi|=w_{0}^{\frac{1}{p_{0}}}|H^{iy}(-\Delta)^{-iy}
     w_{0}^{-\frac{1}{p_{0}}}
     (w_{0}^{1/p_{0}}w_{1}^{-1/p_{1}})^{iy}
     \phi|
\end{equation*}
and by a similar argument we deduce
\begin{equation*}
  \|T_{iy}\phi\|_{L^{p_{0}}}\le 
  c(n,\epsilon,p_{0},w_{0})K_{0}^{1+2p_{0}^{2}}
  (1+|y|)^{n}\|\phi\|_{L^{p_{0}}}.
\end{equation*}
Thus we are in position to apply complex
interpolation for the family $T_{z}$,
and we conclude that, for $0<\theta<1$,
\begin{equation*}
  \|T_{\theta}\phi\|_{L^{p_{\theta}}}\le 
  c(n,p_{j},w_{j})K_{0}^{2(1+p_{0}^{2}+p_{1}^{2})}
  C(A,V)^{\theta}
  \|\phi\|_{L^{p_{\theta}}}
\end{equation*}
which is equivalent to
\begin{equation*}
  \|H^{\theta}\phi\|_{L^{p_{\theta}}(w_{\theta})}\le
  c(n,\epsilon,p_{j},w_{j})K_{0}^{2(1+p_{0}^{2}+p_{1}^{2})}
  C(A,V)^{\theta}
  \|(-\Delta)^{\theta}\phi\|_{L^{p_{\theta}}(w_{\theta})}.
\end{equation*}
Notice that
\begin{equation}\label{eq:interpp}
  \frac{1}{p_{\theta}}=\frac{1-\theta}{p_{0}}+\frac{\theta}{p_{1}}
\end{equation}
and since $1<p_{0}<\infty$, $1<p_{1}<n/2$ are arbitrary,
$p_{\theta}$ can be any index in the range $1<p<n/(2 \theta)$.

Summing up, we have proved inequality \eqref{eq:fract}
for all choices of $0<\theta<1$, $1<p<n/(2 \theta)$
and all weights $w(x)$ which can be represented in the form
\begin{equation}\label{eq:choicew}
  w=w_{0}^{p_{\theta}\frac{1-\theta}{p_{0}}}
     w_{1}^{p_{\theta}\frac{\theta}{p_{1}}},
\end{equation}
with $w_{j}\in A_{p_{j}}$.
The indices $p_{0},p_{1}$ must be such that
\begin{equation*}
  \frac{1}{p}=\frac{1-\theta}{p_{0}}+\frac{\theta}{p_{1}}
\end{equation*}
and of course $1<p_{0}<\infty$, $1<p_{1}<n/2$. 
It is clear that the weights of the form \eqref{eq:choicew}
belong to $A_{p}$ (using e.g.~the characterization in therms
of maximal estimates). Conversely, it is not difficult
to see that any $A_{p}$ weight can be represented in the
form \eqref{eq:choicew}. Indeed, recall the following characterization
of Muckenhoupt weights (see \cite{Stein93-a}): $w\in A_{p}$,
$1\le p<\infty$, if and only if there exist two weights 
$a(x),b(x)\in A_{1}$ with $w=a \cdot b^{1-p}$. Then if we choose
\begin{equation*}
  w_{0}(x)=a(x)b(x)^{1-p_{0}},\qquad
  w_{1}(x)=a(x)b(x)^{1-p_{1}}
\end{equation*}
we see that \eqref{eq:choicew} is satisfied, and of course
$w_{j}\in A_{p_{j}}$. This concludes the proof.



\appendix  

\section{Proof of Lemma \ref{lem:auscapp}}\label{sec:appendix}

The following proof follows \cite{AuscherMartell07-a} closely,
with some minor modifications and simplifcations
as explained at the beginning of Section \ref{sec:bddfunct}.
We denote by $\one{A}$ the characteristic function of 
a set $A$, and, given a ball $B$, by $mB$ the ball
with the same center and radius multiplied by a factor $m$.
Consider the sets
\begin{equation*}
  U_{\lambda}=\{MF>K \lambda,\ G\le \gamma \lambda\}\ \ \subseteq\ \ 
  E_{\lambda}=\{MF>\lambda\}.
\end{equation*}
$E_{\lambda}$ is open and we can decompose it in a sequence of
disjoint Whitney cubes $E=\bigcup_{j} Q_{j}$ with
$4Q_{j}\cap (\mathbb{R}^{n}\setminus E_{\lambda})\neq \emptyset$,
so that
\begin{equation}\label{eq:xj}
  \exists
  x_{j}\in 4 Q_{j} \quad\text{with}\quad
  MF(x_{j})\le \lambda.
\end{equation}
To each $Q_{j}$ we associate a ball $B_{j}$ with the same center as 
$Q_{j}$ and radius equal to $16$ times the side of $Q_{j}$.
Clearly we have also $U_{\lambda}=\bigcup_{j}E_{\lambda}\cap Q_{j}$.
In the following we shall discard the cubes such that
$U_{\lambda}\cap Q_{j}=\emptyset$, and select an arbitrary
$y_{j}\in U_{\lambda}\cap Q_{j}$, so that
\begin{equation}\label{eq:yj}
  y_{j}\in Q_{j},\qquad
  MF(y_{j})>K \lambda,\qquad 
  G(y_{j})\le \gamma \lambda.
\end{equation}

We remark that from the above choices it follows
\begin{equation}\label{eq:MFcutoff}
  |\{MF>K \lambda\}\cap Q_{j}|\le
  |\{M(F\one{B_{j}})>K \lambda/2\}|.
\end{equation}
Indeed, take any point 
$x\in \{MF>\lambda\}\cap Q_{j}$ and a ball $B$ containing $x$ with
$\int_{B}|F|>K \lambda|B|$. If $B \subseteq B_{j}$ we have
\begin{equation*}
  \int_{Q\cap B_{j}}|F|=\int_{B}|F|>K \lambda|B|
  \ \implies\ 
  M(F\one{B_{j}})(x)>K \lambda;
\end{equation*}
if on the other hand $B\not \subseteq B_{j}$, it is easy to chack 
that $2B$ must contain $x_{j}$ and this implies
(recalling that $MF(x_{j})\le \lambda$)
\begin{equation*}
  \int_{B \setminus B_{j}}|F|\le \int_{2B}|F|\le \lambda |2B|
\end{equation*}
so that, using $K\ge 2^{n+2}a\ge 2^{n+2}$,
\begin{equation*}
  \int_{B\cap B_{j}}|F|> K \lambda|B|-|2 B|\lambda\ge
  (K-2^{n}) \cdot|B\cap B_{j}|\cdot\lambda
  \ge
  \frac{K \lambda} 2 \cdot|B\cap B_{j}|.
\end{equation*}

In order to prove inequality \eqref{eq:lemAM}, we rewrite it as
\begin{equation*}
  w(U_{\lambda})\le 
  \|w\|_{RH_{s'}}
  C_{0}\cdot
  \left(
    \frac{\gamma}{K}+\frac{a^{q}}{K^{q}}
  \right)^{\frac1s}
  \cdot
  w(E_{\lambda})
\end{equation*}
which is implied by
\begin{equation*}
  w(U_{\lambda}\cap Q_{j})\le 
  \|w\|_{RH_{s'}} 
  C_{0}\cdot
  \left(
    \frac{\gamma}{K}+\frac{a^{q}}{K^{q}}
  \right)^{\frac1s}
  \cdot
  w(Q_{j})\quad\text{for every $j$}.
\end{equation*}
Thus, recalling \eqref{eq:RHineq}, we see that it is sufficient
to prove
\begin{equation}\label{eq:suffQj}
  |U_{\lambda}\cap Q_{j}|\le
  C_{0}\cdot
  \left(
    \frac{\gamma}{K}+\frac{a^{q}}{K^{q}}
  \right)
  |Q_{j}|\quad\text{for every $j$}.
\end{equation}

Now, by \eqref{eq:MFcutoff}, we can write
\begin{equation*}
  |U_{\lambda}\cap Q_{j}|\le|\{MF>K \lambda\}\cap Q_{j}|\le
  |\{M(F\one{B{j}})>K \lambda/2\}|
\end{equation*}
and using 
$F\one{B_{j}}\le G_{B_{j}}\one{B_{j}}+H_{B_{j}}\one{B_{j}}$ 
we obtain
\begin{equation}\label{eq:twopieces}
  |U_{\lambda}\cap Q_{j}|\le
   |\{M(G_{B_{j}}\one{B_{j}})>K \lambda/4\}|+
   |\{M(H_{B_{j}}\one{B_{j}})>K \lambda/4\}|=
   I+II.
\end{equation}
To the term $I$ we apply the weak bound \eqref{eq:Mqqbound}
for $q=1$:
\begin{equation}\label{eq:firstMF}
  |\{M(G_{B_{j}}\one{B_{j}})>K \lambda/4\}|\le 
  \frac{4c_{1}}{K \lambda}\int_{B_{j}}|G_{B_{j}}|\le 
  \frac{4c_{1}}{K \lambda}|B_{j}|G(y_{j})\le
  \frac{2^{5n+2}c_{1}}{K}|Q_{j}|\gamma
\end{equation}
where we used \eqref{eq:cond3}, \eqref{eq:yj} and
$|B_{j}|\le 2^{5n}|Q_{j}|$.

Consider then the term $II$ in \eqref{eq:twopieces}. When $q=\infty$
we can write by \eqref{eq:cond2}, \eqref{eq:xj}, \eqref{eq:yj}
and $K\ge 2^{n+1}a$
\begin{equation*}
  \|M(H_{B_{j}}\one{B_{j}})\|_{L^{\infty}}\le
  \|H_{B_{j}}\one{B_{j}}\|_{L^{\infty}}\le
  a(MF(x_{j})+MG(y_{j}))\le 2a \lambda\le \frac{K \lambda}{4}
\end{equation*}
so that $II \equiv0$. When $q<\infty$, we use the 
weak $(q,q)$ bound \eqref{eq:Mqqbound}, \eqref{eq:cond2} and 
\eqref{eq:xj} to obtain
\begin{equation*}
  II\le \frac{4^{q}c_{q}}{(K \lambda)^{q}}
  \|H_{B_{j}}\|_{L^{q}(B_{j})}^{q}\le
  \frac{4^{q}c_{q}}{(K \lambda)^{q}}
           \cdot |B_{j}|\cdot a^{q}[MF(x_{j})+G(y_{j})]^{q}\le
  \frac{2^{5(n+q)}c_{q}a^{q}}{K ^{q}}|Q_{j}|
\end{equation*}
which together with \eqref{eq:firstMF} implies \eqref{eq:suffQj}
and concludes the proof of \eqref{eq:lemAM}.

We now prove \eqref{eq:1lemAM}; we can assume
that the right hand side is finite. First we choose $K$ large enough
and $\gamma$ small enough that
\begin{equation*}
  C_{0}\cdot
  \left(
    \frac{\gamma}{K}+\frac{a^{q}}{K^{q}}
  \right)^{\frac1s}
  \cdot
  \|w\|_{RH_{s'}}\le
  \frac{1}{2K^{p}};
\end{equation*}
to obtain this, it is sufficient to set
\begin{equation}\label{eq:Kaga}
  K^{q-ps}=4^{s}(C_{0}\|w\|_{RH_{s'}}+2^{n})^{s}a^{q},\qquad
  \gamma=4^{-s}(C_{0}\|w\|_{RH_{s'}}+2^{n})^{-s}\cdot K^{1-ps}.
\end{equation}
With this choice, \eqref{eq:lemAM} implies
(after a rescaling $\lambda\to \lambda/K$)
\begin{equation}\label{eq:Mrescal}
  w\{MF>\lambda\}\le \frac1{2K^{p}} w\{MF>\lambda/K\}+ 
         w\{MG>\gamma\lambda/K\}.
\end{equation}
Now define, for $j\in \mathbb{Z}$,
\begin{equation*}
  c_{j}=\int_{K^{j}}^{K^{j+1}} \!\!\!\!\!\!\!
       p \lambda^{p}
      w\{MF>\lambda\}\frac{d \lambda}{\lambda},\qquad
  d_{j}=\int_{\gamma K^{j-1}}^{\gamma K^{j}} \!\!\!\!\!\!\!
       p \lambda^{p}
      w\{MG>\lambda\}\frac{d \lambda}{\lambda}.
\end{equation*}
Multiplying \eqref{eq:Mrescal} by $p \lambda^{p}$ and integrating
in $d \lambda/\lambda$ we obtain that $c_{j}$, $d_{j}$
are finite and satisfy
\begin{equation}\label{eq:cjdj}
  c_{j}\le \frac12 c_{j-1}+\left(\frac{K}{\gamma}\right)^{p}d_{j}. 
\end{equation}
Summing from $-N$ to $N$, $N>0$, we have, with $C'=(K/\gamma)^{p}$,
\begin{equation*}
  \sum_{-N}^{N}c_{j}\le 
  \frac12\sum_{-N-1}^{N-1}c_{j}+C'\sum_{-N}^{N}d_{j}\le
     \frac12\sum_{-N}^{N}c_{j}+\frac12c_{-N-1}+
     C'\sum_{-N}^{N}d_{j}
\end{equation*}
and hence
\begin{equation*}
  \sum_{-N}^{N}c_{j}\le c_{-N-1}+2C'\sum_{-N}^{N}d_{j}
  \ \implies\ 
  \sum_{-\infty}^{+\infty}c_{j}\le 
  \limsup_{j\to-\infty}c_{j}
  +2C'\sum_{-\infty}^{+\infty}d_{j}.
\end{equation*}
If we can show that $c_{j}$ is uniformly bounded for $j<0$,
this implies that the series in $c_{j}$ converges and
hence the limsup is actually $0$, implying
\begin{equation*}
  \sum_{-\infty}^{+\infty}c_{j}\le 
  2\left(\frac{K}{\gamma}\right)^{p}\sum_{-\infty}^{+\infty}d_{j}
\end{equation*}
which gives \eqref{eq:1lemAM} and concludes the proof.
The bound on $c_{j}$ is easy if the weight $w$ is an $L^{\infty}$ 
function: using the weak $(1,1)$ estimate for $MF$ we have
\begin{equation*}
  c_{j}\le \|w\|_{L^{\infty}}\|F\|_{L^{1}}
  \int_{K^{j-1}}^{K^{j}}p \lambda^{p-1}d \lambda
\end{equation*}
which is bounded uniformly for $j<0$ since $K>1$ and $p\ge1$.
If $w$ is not in $L^{\infty}$, we first prove the estimate for the
truncated weight $w_{R}=\inf\{w,R\}$ for all $R>0$, then
observe that the constant in the estimate depends only
on the quantity $\|w_{R}\|_{RH_{s'}}$, which is bounded
uniformly in $R\ge1$ since $w\in RH_{s'}$, and does not depend on the
$L^{\infty}$ norm of the weight.
Letting $R\to \infty$ we obtain \eqref{eq:1lemAM}.




\end{document}